\numberwithin{equation}{section}
\numberwithin{figure}{section}
\theoremstyle{plain}
\newtheorem{thm}{\protect\theoremname}
\theoremstyle{definition}
\newtheorem{defn}[thm]{\protect\definitionname}
\theoremstyle{plain}
\newtheorem{lem}[thm]{\protect\lemmaname}
\theoremstyle{plain}
\newtheorem{prop}[thm]{\protect\propositionname}
\theoremstyle{remark}
\newtheorem{rem}[thm]{\protect\remarkname}
\theoremstyle{definition}
\newtheorem{example}[thm]{\protect\examplename}
\providecommand{\definitionname}{Definition}
\providecommand{\examplename}{Example}
\providecommand{\lemmaname}{Lemma}
\providecommand{\propositionname}{Proposition}
\providecommand{\remarkname}{Remark}
\providecommand{\theoremname}{Theorem}
\begin{document}
\title{Self-Exciting Multifractional Processes }
\author{Fabian A. Harang~~~~~~~ Marc Lagunas-Merino~~~~~~~ Salvador
Ortiz-Latorre \\
}
\date{\today}
\begin{abstract}
We propose a new multifractional stochastic process which allows for
self-exciting behavior, similar to what can be seen for example in
earthquakes and other self-organizing phenomena. The process can be
seen as an extension of a multifractional Brownian motion, where the
Hurst function is dependent on the past of the process. We define
this through a stochastic Volterra equation, and we prove existence
and uniqueness of this equation, as well as give bounds on the $p-$order
moments, for all $p\geq1$. We show convergence of an Euler-Maruyama
scheme for the process, and also give the rate of convergence, which
is depending on the self-exciting dynamics of the process. Moreover,
we discuss different applications of this process, and give examples
of different functions to model self-exciting behavior.
\end{abstract}

\maketitle

\section{Introduction and Notation }

In recent years, higher computer power and better tools from statistics
show that there are many natural phenomena which do not follow the
standard normal distribution, but rather exhibit different types of
memory, and sometimes changing these properties over time. Therefore
several different types of extensions of standard stochastic processes
have been proposed to try to give a more realistic picture of nature
corresponding to what we observe. There are several stochastic processes
which are popular today for the modeling of varying memory in a process,
one of them is known as the Hawkes process, see for example \cite{Ha18}.
This is a point process which allows for self-exciting behavior by
letting the conditional intensity to be dependent on the past events
of the process. In this note, we will consider a continuous type of
process which is inspired by the multifractional Brownian motion.
This process is interesting for being a non-stationary Gaussian process
which has regularity properties changing in time. A simple version
of this process is known as the Riemann-Liouville multifractional
Brownian motion and can be represented by the integral 
\begin{equation}
B_{t}^{h}=\int_{0}^{t}\left(t-s\right)^{h\left(t\right)-\frac{1}{2}}dB_{s},\label{eq:mBm}
\end{equation}
where $\left\{ B_{t}\right\} _{t\in[0,T]}$ is a Brownian motion and
$h$ is a deterministic function. Interestingly, if we restrict the
process to a small interval, say $\left[t-\epsilon,t+\epsilon\right]$,
the local $\alpha$-Hölder regularity of this process on that interval
is of order $\alpha\sim h(t)$ if $\epsilon$ is sufficiently small.
Thus the regularity of the process is depending on time. Applications
of such processes have been found in fields ranging from Internet
traffic and image synthesis to finance, see for example \cite{BeJaRo97,BiPaPi13,BiPaPi15,BiPi07,BiPi15,CoLeVe14,LeVe14,PiBiPa18}.
In 2010 D. Sornette and V. Filimonov proposed a self-excited multifractal
process to be considered in the modeling of earthquakes and financial
market crashes, see \cite{FiSo11}. By self-excited process, the authors
mean a process where the future state depends directly on all the
past states of the process. The model they proposed was defined in
a discrete manner. They also suggested a possible continuous time
version of their model, but they did not study its existence rigorously.
This article is therefore meant as an attempt to propose a continuous
time version of a similar model to that proposed by Sornette and Filimonov,
and we will study its mathematical properties. 

We will first consider an extension of a multifractional Brownian
process, which is found as the solution to the stochastic differential
equation 
\begin{equation}
X_{t}^{h}=\int_{0}^{t}\left(t-s\right)^{h(t,X_{s}^{h})-\frac{1}{2}}dB_{s},\label{eq:SEM}
\end{equation}
where $\left\{ B_{t}\right\} _{t\in[0,T]}$ is a general $d$-dimensional
Brownian motion, and $h$ is bounded and takes values in $(0,1)$.
Already at this point we could think that the local regularity of
the process $X$ would be depending on the history of $X$ through
$h$, in a similar manner as for the multifractional Brownian motion
in equation (\ref{eq:mBm}). As we can see, the formulation of the
process is through a stochastic Volterra equation with a possibly
singular kernel. We will therefore show the existence and uniqueness
of this equation, and then say that its solution is a Self-Exciting
Multifractional Process (SEM) $X^{h}$. We will study the probabilistic
properties, and discuss examples of functions $h$ which give different
dynamics for the process $X^{h}$. The process is neither stationary
nor Gaussian in general, and is therefore mathematically challenging
to apply in any standard model for example in finance but do, at this
point, have some interesting properties on its own. The study of such
processes could also shed some light on natural phenomena behaving
outside of the scope of standard stochastic processes, such as the
self-excited dynamics of earthquakes as they argue in \cite{FiSo11}. 

We will first show the existence and uniqueness of the Equation (\ref{eq:SEM})
and then study probabilistic and path properties such as variance
and regularity of the process. We will introduce an Euler-Maruyama
scheme to approximate the process, and show its strong convergence
as well as estimate its rate of convergence. Finally, we will discuss
an extension of the process to a Gamma type process, which might be
interesting for various applications. 

\subsection{Notation and preliminaries}

Let $T>0$ be a fixed constant. We will use the standard notation
$L^{\infty}\left(\left[0,T\right]\right)$ for essentially bounded
functions on the interval $\left[0,T\right]$. Furthermore, let $\triangle^{(m)}\left[a,b\right]$
denote the $m$-simplex. That is, define $\triangle^{(m)}[a,b]$ to
be given by 
\[
\triangle^{(m)}\left([a,b]\right):=\left\{ (s_{m},\ldots,s_{1}):a\leq s_{1}<\ldots<s_{m}\leq b\right\} .
\]
 We will consider functions $k:\triangle^{\left(2\right)}\left(\left[0,T\right]\right)\rightarrow\mathbb{R}_{+}$
which will be used as a kernel in an integral operator, in the sense
that we consider integrals of the form 
\[
\int_{0}^{t}k\left(t,s\right)f\left(s\right)ds,
\]
whenever the integral is well defined. We call these functions Volterra
kernels. 
\begin{defn}
Let $k:\triangle^{(2)}\left(\left[0,T\right]\right)\rightarrow\mathbb{R}_{+}$
be a Volterra kernel. If $k$ satisfies 
\[
t\mapsto\int_{0}^{t}k\left(t,s\right)ds\in L^{\infty}\left(\left[0,T\right]\right)
\]
and 
\[
\limsup_{\epsilon\downarrow0}\parallel\int_{\cdot}^{\cdot+\epsilon}k\left(\cdot+\epsilon,s\right)ds\parallel_{L^{\infty}\left(\left[0,T\right]\right)}=0,
\]
then we say that $k\in\mathcal{K}_{0}.$ 
\end{defn}

We will frequently use the constant $C$ to denote a general constant,
which might vary throughout the text. When it is important, we will
mention what this constant depends upon in subscript, i.e. $C=C_{T}$
to denote dependence in $T$ .

\section{Zhang's Existence and Uniqueness of Stochastic Volterra Equations}

In this section we will assume that $\left\{ B_{t}\right\} _{t\in\left[0,T\right]}$
is a $d$-dimensional Brownian motion defined on a filtered probability
space $(\Omega,\mathcal{F},\left\{ \mathcal{F}_{t}\right\} _{t\in\left[0,T\right]},P)$.
Consider the following Volterra equation

\begin{equation}
X_{t}=g\left(t\right)+\int_{0}^{t}\sigma\left(t,s,X_{s}\right)dB_{s},\qquad0\leq t\leq T,\label{eq:Stoch Volterra equation}
\end{equation}
where $g$ is a measurable, $\left\{ \mathcal{F}_{t}\right\} $-adapted
stochastic process and $\sigma:\triangle^{(2)}\left(\left[0,T\right]\right)\times\mathbb{R}^{n}\rightarrow\mathcal{L}\left(\mathbb{R}^{d},\mathbb{R}^{n}\right)$
is a measurable function, where $\mathcal{L}\left(\mathbb{R}^{d},\mathbb{R}^{n}\right)$
is the linear space of $d\times n$-matrices.

Next we write a simplified version of the hypotheses for $\sigma$
and $g$, introduced previously by Zhang in \cite{Zha10}, which will
be used to prove that there exists a unique solution to the equation
$\left(\ref{eq:Stoch Volterra equation}\right)$. 
\begin{description}
\item [{(H1)}] There exists $k_{1}\in\mathcal{K}_{0}$ such that the function
$\sigma$ satisfies the following linear growth inequality for all
$(s,t)\in\triangle^{(2)}\left([0,T]\right),$ and $x\in\mathbb{R}^{n}$,
\[
\left|\sigma\left(t,s,x\right)\right|^{2}\leq k_{1}\left(t,s\right)\left(1+\left|x\right|^{2}\right).
\]
\item [{(H2)}] There exists $k_{2}\in\mathcal{K}_{0}$ such that the function
$\sigma$ satisfies the following Lipschitz inequality for all $(s,t)\in\triangle^{(2)}\left([0,T]\right),$
$x,y\in\mathbb{R}^{n}$, 
\[
\left|\sigma\left(t,s,x\right)-\sigma\left(t,s,y\right)\right|^{2}\leq k_{2}\left(t,s\right)\left|x-y\right|^{2}.
\]
\item [{(H3)}] For some $p\geq2$, we have 
\[
\sup_{t\in\left[0,T\right]}\int_{0}^{t}\left[k_{1}\left(t,s\right)+k_{2}\left(t,s\right)\right]\cdot\mathbb{E}\left[\left|g\left(s\right)\right|^{p}\right]ds<\infty,
\]
where $k_{1}$ and $k_{2}$ satisfy $\mathbf{H1}$ and $\mathbf{H2}.$ 
\end{description}
Based on the above hypotheses, we can use the following tailor made
version of the theorem on existence and uniqueness found in \cite{Zha10}
to show that there exists a unique solution to equation $\left(\ref{eq:Stoch Volterra equation}\right)$.
\begin{thm}
\label{thm:Zhang Existence thm}$\left(\textnormal{Xicheng Zhang}\right)$
Assume that $\sigma:\triangle^{(2)}\left([0,T]\right)\times\mathbb{R}^{n}\rightarrow\mathcal{L}\left(\mathbb{R}^{d},\mathbb{R}^{n}\right)$
is measurable, and $g$ is an $\mathbb{R}^{n}$-valued, $\left\{ \mathcal{F}_{t}\right\} $-adapted
process satisfying $\mathbf{H1}-\mathbf{H3}$. Then there exists a
unique measurable, $\mathbb{R}^{n}$-valued, $\left\{ \mathcal{F}_{t}\right\} $-adapted
process $X_{t}$ satisfying for all $t\in[0,T]$ the equation 
\[
X_{t}=g\left(t\right)+\int_{0}^{t}\sigma\left(t,s,X_{s}\right)dB_{s}.
\]
 Furthermore, for some $C_{T,p,k_{1}}>0$ we have that 
\[
\mathbb{E}\left[\left|X_{t}\right|^{p}\right]\leq C_{T,p,k_{1}}\left(1+\mathbb{E}\left[\left|g\left(t\right)\right|^{p}\right]+\sup_{t\in[0,T]}\int_{0}^{t}k_{1}\left(t,s\right)\mathbb{E}\left[\left|g\left(s\right)\right|^{p}\right]ds\right),
\]
where $p$ is from $\mathbf{H3}.$
\end{thm}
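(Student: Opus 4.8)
\section*{Proof proposal}

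Since the statement is a tailored restatement of Zhang's theorem \cite{Zha10}, the plan is to follow the classical Picard iteration combined with a fixed-point argument adapted to singular Volterra kernels. First I would set up the iteration $X_t^{0}=g(t)$ and $X_t^{n+1}=g(t)+\int_0^t\sigma(t,s,X_s^{n})\,dB_s$, working in the space of measurable, $\left\{\mathcal{F}_t\right\}$-adapted processes for which $\sup_{t\in[0,T]}\mathbb{E}\left[\left|X_t\right|^p\right]$ is finite, with $p$ taken from $\mathbf{H3}$. The first task is to verify that each iterate is well defined and stays in this space; this is where the a priori moment bound is established.

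The moment estimate is obtained by applying the Burkholder--Davis--Gundy inequality to the stochastic integral, giving $\mathbb{E}\left[\left|\int_0^t\sigma(t,s,X_s)\,dB_s\right|^p\right]\leq C_p\,\mathbb{E}\left[\left(\int_0^t\left|\sigma(t,s,X_s)\right|^2 ds\right)^{p/2}\right]$. Invoking the linear growth condition $\mathbf{H1}$, the integrand is dominated by $k_1(t,s)\left(1+\left|X_s\right|^2\right)$. Since $t\mapsto\int_0^t k_1(t,s)\,ds$ is bounded by the definition of $\mathcal{K}_0$, a Jensen/H\"older inequality with respect to the finite measure $k_1(t,s)\,ds$ transfers the power $p/2$ inside the integral, producing a bound of the form $C_{T,p,k_1}\bigl(1+\mathbb{E}[\left|g(t)\right|^p]+\sup_t\int_0^t k_1(t,s)\,\mathbb{E}[\left|X_s\right|^p]\,ds\bigr)$. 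Combined with $\mathbf{H3}$ this yields the stated estimate for the solution and uniform $p$-th moment bounds for the Picard iterates.

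For convergence and uniqueness I would control the successive differences $\delta_t^{n}:=\mathbb{E}\left[\left|X_t^{n+1}-X_t^{n}\right|^p\right]$. Using Burkholder--Davis--Gundy together with the Lipschitz condition $\mathbf{H2}$, and absorbing the power $p/2$ exactly as above, one arrives at a linear Volterra inequality $\delta_t^{n+1}\leq C\int_0^t k_2(t,s)\,\delta_s^{n}\,ds$. Iterating this inequality, convergence of the scheme reduces to showing that a sufficiently high power of the integral operator with kernel $k_2$ is a strict contraction on $L^\infty\left([0,T]\right)$. This is precisely where the second defining property of $\mathcal{K}_0$, namely $\limsup_{\epsilon\downarrow0}\parallel\int_{\cdot}^{\cdot+\epsilon}k_2(\cdot+\epsilon,s)\,ds\parallel_{L^\infty([0,T])}=0$, is used: the uniform smallness of the kernel mass near the diagonal forces the iterated operator to contract, so the sequence $\{X^{n}\}$ is Cauchy and its limit solves equation (\ref{eq:Stoch Volterra equation}). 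Applying the same difference estimate to two hypothetical solutions, together with a generalized (Volterra-type) Gronwall lemma built from the same contraction, gives uniqueness.

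The main obstacle is the singular Volterra kernel: unlike the case of ordinary stochastic differential equations, $k(t,s)$ may blow up on the diagonal as $s\to t$, so the classical Gronwall inequality is unavailable and a naive contraction on $[0,T]$ need not hold. The heart of the argument is therefore the generalized Gronwall lemma for $\mathcal{K}_0$ kernels, whose proof exploits the $\limsup$ condition to make a high iterate of the kernel operator a strict contraction (equivalently, to give the operator spectral radius zero). Once this lemma is in place, existence, uniqueness, and the moment bound all follow in the standard way.
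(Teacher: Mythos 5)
You should know at the outset that the paper does not prove this theorem at all: it is stated as a ``tailor made version'' of Zhang's result and used as a black box, with \cite{Zha10} as the sole justification. So your proposal can only be measured against Zhang's original argument, and on that score it is essentially a faithful reconstruction: Picard iteration, BDG plus the Jensen/H\"older trick with respect to the finite measure $k_i\left(t,s\right)ds$ (this is exactly where $p\geq2$ from $\mathbf{H3}$ is needed, so that $\left(\int_{0}^{t}k\left|\Delta\right|^{2}ds\right)^{p/2}\leq\left(\int_{0}^{t}k\,ds\right)^{p/2-1}\int_{0}^{t}k\left|\Delta\right|^{p}ds$), and a generalized Gronwall mechanism exploiting the near-diagonal smallness built into $\mathcal{K}_{0}$ are precisely the ingredients of Zhang's proof.

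Two steps in your sketch are asserted where a genuine argument is still owed. First, the claim that a high power of $Af\left(t\right)=\int_{0}^{t}k_{2}\left(t,s\right)f\left(s\right)ds$ is a strict contraction on $L^{\infty}\left(\left[0,T\right]\right)$ is true but not immediate: you should fix $\delta<1$ and, using the $\limsup$ condition (which after the substitution $u=t+\epsilon$ says $\sup_{u}\int_{u-\epsilon}^{u}k_{2}\left(u,s\right)ds\leq\delta$ for small $\epsilon$), split $\int_{0}^{t}=\int_{0}^{t-\epsilon}+\int_{t-\epsilon}^{t}$ and induct over the $N=\lceil T/\epsilon\rceil$ intervals of length $\epsilon$; this yields a bound of the shape $\Vert A^{n}\Vert\leq C_{N}\,n^{N}\delta^{n}$, hence $\limsup_{n}\Vert A^{n}\Vert^{1/n}\leq\delta$ for every $\delta>0$, i.e. spectral radius zero, which is what actually drives both the Cauchy property of the iterates and uniqueness. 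Second, your moment estimate as derived closes with $\mathbb{E}\left[\left|X_{s}\right|^{p}\right]$ under the integral, whereas the theorem's bound has $\mathbb{E}\left[\left|g\left(s\right)\right|^{p}\right]$ there; ``combined with $\mathbf{H3}$'' glosses the needed extra pass. You must run the same iteration machinery once more on $u\left(t\right)=\mathbb{E}\left[\left|X_{t}\right|^{p}\right]$, noting that $t\mapsto\int_{0}^{t}k_{1}\left(t,s\right)\left(1+\mathbb{E}\left[\left|g\left(s\right)\right|^{p}\right]\right)ds$ is bounded by $\mathbf{H3}$ and that $\sum_{n}\Vert A^{n}\mathbf{1}\Vert_{\infty}<\infty$ by the spectral radius argument, which then produces exactly the constant $C_{T,p,k_{1}}$ structure claimed. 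With these two gaps filled (plus the routine check that each Picard iterate is jointly measurable and adapted, which is needed because $\sigma$ depends on the outer time $t$ and the integral is an It\^o integral only for each fixed $t$), your proof is complete and follows the same route as the cited source.
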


It will also be useful, in future sections, to consider the following
additional hypothesis.
\begin{description}
\item [{(H4)}] The process $g$ is continuous and satisfies for some $\delta>0$
and for any $p\geq2$,
\[
\mathbb{E}\left[\sup_{t\in\left[0,T\right]}\left|g\left(t\right)\right|^{p}\right]<\infty,
\]
and 
\[
\mathbb{E}\left[\left|g\left(t\right)-g\left(s\right)\right|^{p}\right]\leq C_{T,p}\left|t-s\right|^{\delta p}.
\]
\end{description}

\section{Self-Exciting Multifractional Stochastic Processes}

Consider the stochastic process given formally by the Volterra equation
\[
X_{t}^{h}=g\left(t\right)+\int_{0}^{t}\left(t-s\right)^{h\left(t,X_{s}^{h}\right)-\frac{1}{2}}dB_{s},
\]
where $g$ is an $\left\{ \mathcal{F}_{t}\right\} $-adapted, one-dimensional
process, $h:\left[0,T\right]\times\mathbb{R}\rightarrow\mathbb{R}_{+}$
and $B$ is a one-dimensional Brownian motion. In this section, we
will show the existence and uniqueness of the solution for the above
equation by means of Theorem \ref{thm:Zhang Existence thm}. Moreover,
we will discuss the continuity properties of the solution. 
\begin{defn}
We say that a function $h:\left[0,T\right]\times\mathbb{R}\rightarrow\mathbb{R}_{+}$
is a Hurst function with parameters $(h_{*},h^{*})$, where $h_{*}\leq h^{*}$,
if $h\left(t,x\right)$ takes values in $\left[h_{*},h^{*}\right]\subset\left(0,1\right)$
for all $x\in\mathbb{R}^{d}$ and $t\in\left[0,T\right]$ and $h$
satisfies the following Lipschitz conditions for all $x,y\in\mathbb{R}$
and $t,t^{\prime}\in\left[0,T\right]$
\[
\left|h\left(t,x\right)-h\left(t,y\right)\right|\leq C\left|x-y\right|,
\]
\[
\left|h\left(t,x\right)-h\left(t^{\prime},x\right)\right|\leq C\left|t-t^{\prime}\right|,
\]
 for some $C>0$.
\end{defn}

\begin{lem}
\label{lem:Well defined sigma}Let $\sigma\left(t,s,x\right)=\left(t-s\right)^{h(t,x)-\frac{1}{2}}$
and let $h$ be a Hurst function with parameters $(h_{*},h^{*})$.
Then 
\begin{equation}
\left|\sigma\left(t,s,x\right)\right|^{2}\leq k\left(t,s\right)\left(1+\left|x\right|^{2}\right),\label{eq:sigma linear growth}
\end{equation}
 where 
\[
k\left(t,s\right)=C_{T}\left(t-s\right)^{2h_{*}-1},
\]
 and 
\begin{equation}
\left|\sigma\left(t,s,x\right)-\sigma\left(t,s,y\right)\right|^{2}\leq C_{T}k\left(t,s\right)\left|\log\left(t-s\right)\right|^{2}\left|x-y\right|^{2}.\label{eq:Lipschitz sigma}
\end{equation}
Moreover, $\sigma$ satisfies $\mathbf{H1}$-$\mathbf{H2}$. 
\end{lem}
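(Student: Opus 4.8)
The plan is to prove the two pointwise bounds separately and then verify that the kernels they produce belong to $\mathcal{K}_0$, which is precisely what gives $\mathbf{H1}$ and $\mathbf{H2}$. For the growth estimate \eqref{eq:sigma linear growth}, I would simply observe that $\left|\sigma(t,s,x)\right|^2=(t-s)^{2h(t,x)-1}$ and, since $h(t,x)\geq h_*$, factor this as $(t-s)^{2(h(t,x)-h_*)}(t-s)^{2h_*-1}$. Because $0\leq 2(h(t,x)-h_*)\leq 2(h^*-h_*)$ and $t-s\in(0,T]$, the first factor is bounded by $C_T:=\max\{1,T^{2(h^*-h_*)}\}$, so that $\left|\sigma(t,s,x)\right|^2\leq C_T(t-s)^{2h_*-1}\leq k(t,s)(1+\left|x\right|^2)$. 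In particular the growth bound carries no genuine dependence on $x$; the factor $1+\left|x\right|^2$ is inserted only to match the form of $\mathbf{H1}$.

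For the Lipschitz bound \eqref{eq:Lipschitz sigma}, I would fix $(t,s)$ and view the map $\phi(u):=(t-s)^{u-1/2}=\exp\big((u-\tfrac12)\log(t-s)\big)$ as a function of the exponent $u\in[h_*,h^*]$, so that $\phi'(u)=\log(t-s)\,(t-s)^{u-1/2}$. The mean value theorem then gives $\sigma(t,s,x)-\sigma(t,s,y)=\phi'(\xi)\,(h(t,x)-h(t,y))$ for some $\xi$ between $h(t,x)$ and $h(t,y)$, hence $\xi\in[h_*,h^*]$. Bounding $(t-s)^{\xi-1/2}\leq C_T(t-s)^{h_*-1/2}$ by the same power argument as above and using the spatial Lipschitz property $\left|h(t,x)-h(t,y)\right|\leq C\left|x-y\right|$ of the Hurst function, I would obtain, after squaring, $\left|\sigma(t,s,x)-\sigma(t,s,y)\right|^2\leq C_T\left|\log(t-s)\right|^2(t-s)^{2h_*-1}\left|x-y\right|^2$, which is \eqref{eq:Lipschitz sigma} once the constant is absorbed into $k(t,s)$.

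It then remains to check the two $\mathcal{K}_0$ conditions for the kernels $k_1=k$ and $k_2(t,s)=C_T(t-s)^{2h_*-1}\left|\log(t-s)\right|^2$. For $k_1$ this is immediate: $\int_0^t(t-s)^{2h_*-1}\,ds=t^{2h_*}/(2h_*)$ is finite and bounded on $[0,T]$ since $2h_*-1>-1$, and the substitution $u=t+\epsilon-s$ shows $\int_t^{t+\epsilon}(t+\epsilon-s)^{2h_*-1}\,ds=\epsilon^{2h_*}/(2h_*)\to0$ uniformly in $t$. The only genuinely technical step, and the one I expect to be the main obstacle, is the integrability of the logarithmic singularity in $k_2$ at the diagonal. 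Here I would use that for any $\eta\in(0,2h_*)$ one has $\left|\log u\right|^2\leq C_\eta u^{-\eta}$ near $0$, so that $u^{2h_*-1}\left|\log u\right|^2\leq C_\eta u^{2h_*-1-\eta}$ with $2h_*-1-\eta>-1$; consequently $\int_0^t u^{2h_*-1}\left|\log u\right|^2\,du$ is finite and bounded on $[0,T]$, while (again by the substitution $u=t+\epsilon-s$) $\int_0^\epsilon u^{2h_*-1}\left|\log u\right|^2\,du\to0$ uniformly in $t$ as $\epsilon\downarrow0$. This establishes $k_1,k_2\in\mathcal{K}_0$ and hence that $\sigma$ satisfies $\mathbf{H1}$--$\mathbf{H2}$.
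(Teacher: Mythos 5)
Your proposal is correct and takes essentially the same route as the paper: the same factorization $(t-s)^{2(h(t,x)-h_{*})}(t-s)^{2h_{*}-1}$ for the growth bound, a mean-value estimate in the exponent for the Lipschitz bound (the paper phrases your MVT step as the inequality $\left|e^{x}-e^{y}\right|\leq e^{\max(x,y)}\left|x-y\right|$, followed by a case split on $\left|t-s\right|\gtrless1$ that your argument with $\xi\in[h_{*},h^{*}]$ renders unnecessary), and the same absorption of the logarithmic singularity into a small power, $\left|\log u\right|\leq C_{T,\delta}u^{-\delta}$, to verify both $\mathcal{K}_{0}$ conditions. The only cosmetic differences are your slightly more careful constant $\max\{1,T^{2(h^{*}-h_{*})}\}$ and the streamlined case-free presentation.
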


\begin{proof}
We prove the three claims in the order they are stated in Lemma \ref{lem:Well defined sigma},
and start to prove equation $\left(\ref{eq:sigma linear growth}\right)$.
Remember that 
\[
h\left(t,x\right)\in\left[h_{*},h^{*}\right]\subset\left(0,1\right),
\]
 for all $t\in[0,T]$ and $x\in\mathbb{R}$, therefore we can trivially
find 
\begin{equation}
\left|\sigma\left(t,s,x\right)\right|^{2}=\left(t-s\right)^{2h(t,x)-1}=\left(t-s\right)^{2\left(h(t,x)-h_{*}\right)+2h_{*}-1}\leq T^{2\left(h^{*}-h_{*}\right)}\left(t-s\right)^{2h_{*}-1},\label{eq:BoundSigma2_No_x}
\end{equation}
which yields equation $\left(\ref{eq:sigma linear growth}\right)$
with $k\left(t,s\right)=C_{T}\left(t-s\right)^{2h_{*}-1}$. Next we
consider equation $\left(\ref{eq:Lipschitz sigma}\right)$, and using
that $x=\exp\left(\log\left(x\right)\right),$ we write 
\[
\sigma\left(t,s,x\right)=\exp\left(\left(\log\left(t-s\right)\right)\left(h(t,x)-\frac{1}{2}\right)\right),
\]
 where $(t,s)\in\triangle^{(2)}\left(\left[0,T\right]\right)$ and
consider the following inequality derived from the fundamental theorem
of calculus 
\begin{equation}
\left|e^{x}-e^{y}\right|\leq e^{\max(x,y)}\left|x-y\right|,\qquad x,y\in\mathbb{R}.\label{eq: expinequality}
\end{equation}
 Using the Lipschitz assumption on $h$ together with the above inequality,
we have that 
\begin{align*}
 & \left|\sigma\left(t,s,x\right)-\sigma\left(t,s,y\right)\right|^{2}\\
 & \leq\exp\left(2\max\left(\log\left(t-s\right)\left(h\left(t,x\right)-\frac{1}{2}\right),\log\left(t-s\right)\left(h\left(t,y\right)-\frac{1}{2}\right)\right)\right)\\
 & \quad\times\left|\log\left(t-s\right)\right|^{2}\left|h\left(t,x\right)-h\left(t,y\right)\right|^{2}\\
 & \leq C^{2}\exp\left(\max\left(\log\left(t-s\right)\left(2h\left(t,x\right)-1\right),\log\left(t-s\right)\left(2h\left(t,y\right)-1\right)\right)\right)\\
 & \quad\times\left|\log\left(t-s\right)\right|^{2}\left|x-y\right|^{2}.
\end{align*}
If $\left|t-s\right|\geq1$ then 
\[
\left|\sigma\left(t,s,x\right)-\sigma\left(t,s,y\right)\right|^{2}\leq C_{T}\left|x-y\right|^{2},
\]
 since $h$ is bounded. If $\left|t-s\right|<1$ then $\log\left(t-s\right)<0$
and, using that if $\theta<0$ then $\max\left(\theta x,\theta y\right)=\theta\min\left(x,y\right)$,
we have 
\begin{align*}
 & \left|\sigma\left(t,s,x\right)-\sigma\left(t,s,y\right)\right|^{2}\\
 & \leq C^{2}\exp\left(\log\left(t-s\right)\min\left(\left(2h\left(t,x\right)-1\right),\left(2h\left(t,y\right)-1\right)\right)\right)\left|\log\left(t-s\right)\right|^{2}\left|x-y\right|^{2}\\
 & \leq C^{2}\left|t-s\right|^{2h_{*}-1}\left|\log\left(t-s\right)\right|^{2}\left|x-y\right|^{2}.
\end{align*}
These estimates yield equation $\left(\ref{eq:Lipschitz sigma}\right)$.

Let $k$ be defined as above and $\nu\geq0$. Then, for any $0\leq a<T$,
$0\leq t<T-a$ and $\delta\in\left(0,1\right)$ fixed we have 
\begin{align*}
\phi_{\nu}\left(t,a\right) & :=\int_{a}^{a+t}k\left(a+t,s\right)\left|\log\left(a+t-s\right)\right|^{2\nu}ds\\
 & \leq C_{T}\int_{a}^{a+t}\left(a+t-s\right)^{\left(2h_{*}-1\right)}\left|\log\left(a+t-s\right)\right|^{2\nu}ds\\
 & =C_{T}\int_{0}^{t}u^{2h_{*}-1}\left|\log\left(u\right)\right|^{2\nu}du\\
 & \leq C_{T}C_{T,\delta}^{2\nu}\int_{0}^{t}u^{2h_{*}-1-2\nu\delta}du=C_{T,\delta,\nu,h_{*}}t^{2\left(h_{*}-\nu\delta\right)},
\end{align*}
where we have used that $\left|\log\left(u\right)\right|\leq C_{T,\delta}u^{-\delta}$
for some constant $C_{T,\delta}>0$. Note that $2\left(h_{*}-\nu\delta\right)>0$
if and only if $h_{*}>\nu\delta$. Therefore, choosing $a=0$, we
have that 
\[
t\mapsto\int_{0}^{t}k\left(t,s\right)\left|\log\left(t-s\right)\right|^{2\nu}ds\in L^{\infty}\left(\left[0,T\right]\right),
\]
if $h_{*}>0$, for $\nu=0$, and if $h_{*}>\delta$, for $\nu=1$.
Furthermore, by setting $a=t^{\prime}$ and $t=\epsilon$ in the estimate
for $\phi_{\nu}\left(t,a\right)$, we have
\begin{align*}
\limsup_{\epsilon\rightarrow0} & \parallel\int_{\cdot}^{\cdot+\epsilon}k\left(\cdot+\varepsilon,s\right)\left|\log\left(\cdot+\varepsilon-s\right)\right|^{2\nu}ds\parallel_{L^{\infty}\left(\left[0,T\right]\right)}\leq\limsup_{\epsilon\rightarrow0}C_{T,\delta,\nu,h_{*}}\varepsilon^{2\left(h_{*}-\nu\delta\right)}=0,
\end{align*}
if $h^{*}>0$, for $\nu=0$, and if $h_{*}>\delta$, for $\nu=1$.
Since $\delta$ can be chosen arbitrarily close to zero then $h_{*}$
can be arbitrarily close to zero. These estimates yield that $\sigma$
satisfies $\mathbf{H1\text{-}H3}$.
\end{proof}
Now, we can give the following theorem showing that the self-exciting
multifractional process from equation $\left(\ref{eq:SEM}\right)$
indeed exists and is unique. 
\begin{thm}
\label{thm:Existence and unique}Let $\sigma\left(t,s,x\right)=\left(t-s\right)^{h(t,x)-\frac{1}{2}}$
and $h$ be a Hurst function with parameters $\left(h_{*},h^{*}\right)$.
Moreover, let $g$ be an $\left\{ \mathcal{F}_{t}\right\} $-adapted,
$\mathbb{R}$-valued stochastic process satisfying $\mathbf{H3}.$
Then, there exists a unique process $X_{t}^{h}$ satisfying the equation
\begin{equation}
X_{t}^{h}=g\left(t\right)+\int_{0}^{t}\left(t-s\right)^{h\left(t,X_{t}^{h}\right)-\frac{1}{2}}dB_{s},\label{eq:SEMequation}
\end{equation}
 where $\left\{ B_{t}\right\} _{t\in\left[0,T\right]}$ is a one-dimensional
Brownian motion. Furthermore, we have the following inequality for
some $p\geq2$
\[
\mathbb{E}\left[\left|X_{t}^{h}\right|{}^{p}\right]\leq C_{T,p,k_{1}}\left(1+\mathbb{E}\left[\left|g\left(t\right)\right|{}^{p}\right]+\sup_{t\in\left[0,T\right]}\int_{0}^{t}\left(t-s\right)^{h_{*}-\frac{1}{2}}\mathbb{E}\left[\left|g\left(s\right)\right|{}^{p}\right]ds\right)
\]
We call this process a Self-Exciting Multifractional process (SEM)
.
\end{thm}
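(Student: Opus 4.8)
The plan is to deduce the statement directly from Zhang's Theorem \ref{thm:Zhang Existence thm}, applied in the scalar case $n=d=1$ with coefficient $\sigma(t,s,x)=(t-s)^{h(t,x)-\frac{1}{2}}$, so that \eqref{eq:SEMequation} is precisely the stochastic Volterra equation \eqref{eq:Stoch Volterra equation} with integrand $\sigma(t,s,X_{s}^{h})$. Because the substantive analytic estimates have already been established in Lemma \ref{lem:Well defined sigma}, the proof reduces to verifying that all hypotheses of Theorem \ref{thm:Zhang Existence thm} are in force and then transcribing its conclusion.

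First I would collect the hypotheses. By inequality \eqref{eq:sigma linear growth} of Lemma \ref{lem:Well defined sigma}, the coefficient $\sigma$ satisfies $\mathbf{H1}$ with the explicit kernel $k_{1}(t,s)=C_{T}(t-s)^{2h_{*}-1}$, and by \eqref{eq:Lipschitz sigma} it satisfies $\mathbf{H2}$ with $k_{2}(t,s)=C_{T}(t-s)^{2h_{*}-1}|\log(t-s)|^{2}$. The same lemma verifies that $k_{1},k_{2}\in\mathcal{K}_{0}$; this is exactly the role of the estimate on $\phi_{\nu}(t,a)$ there, read with $\nu=0$ for $k_{1}$ and $\nu=1$ for $k_{2}$. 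The remaining hypothesis $\mathbf{H3}$ is assumed in the statement for the given $\{\mathcal{F}_{t}\}$-adapted process $g$, and the joint measurability of $\sigma$ on $\triangle^{(2)}([0,T])\times\mathbb{R}$ is immediate from the continuity of $(t,s,x)\mapsto(t-s)^{h(t,x)-1/2}$. Hence every assumption of Theorem \ref{thm:Zhang Existence thm} is met.

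Next I would simply invoke Theorem \ref{thm:Zhang Existence thm}, which yields at once the existence of a unique measurable, $\{\mathcal{F}_{t}\}$-adapted process $X^{h}$ solving \eqref{eq:SEMequation}; this is the object we name the SEM process. For the moment estimate I would take the bound on $\mathbb{E}[|X_{t}|^{p}]$ provided by the same theorem, substitute the explicit form of $k_{1}$ found above, and absorb the constant $C_{T}$ into the prefactor $C_{T,p,k_{1}}$, which produces the displayed inequality for $\mathbb{E}[|X_{t}^{h}|^{p}]$.

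Since all the genuine work sits in Lemma \ref{lem:Well defined sigma}, I do not expect a substantial obstacle here; the only delicate point is the logarithmic factor appearing in the Lipschitz kernel $k_{2}$, which forces $\mathbf{H2}$ to be checked via the $\nu=1$ instance of $\phi_{\nu}$. Its membership $k_{2}\in\mathcal{K}_{0}$ nonetheless holds because the required condition $h_{*}>\nu\delta=\delta$ can be arranged by choosing $\delta>0$ small, using only $h_{*}>0$. It is also worth stressing that, although $h$ depends on the solution, this dependence enters $\sigma$ only through the current state $x$ rather than through the entire trajectory, so \eqref{eq:SEMequation} fits verbatim into the Volterra framework of Theorem \ref{thm:Zhang Existence thm}, and no additional fixed-point or localisation argument beyond the one internal to that theorem is required.
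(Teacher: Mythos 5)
Your proposal is correct and follows essentially the same route as the paper, whose own proof is precisely the one-line application of Zhang's Theorem \ref{thm:Zhang Existence thm} after Lemma \ref{lem:Well defined sigma} verifies $\mathbf{H1}$--$\mathbf{H2}$ (with $\mathbf{H3}$ assumed on $g$); you have merely spelled out the verification in more detail. One pedantic remark: substituting $k_{1}(t,s)=C_{T}(t-s)^{2h_{*}-1}$ literally gives the exponent $2h_{*}-1$ in the moment bound rather than the $h_{*}-\frac{1}{2}$ displayed in the theorem statement, a discrepancy present in the paper itself and immaterial to the argument.
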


\begin{proof}
We have seen in Lemma \ref{lem:Well defined sigma} that $\sigma\left(t,s,x\right)=\left(t-s\right)^{h\left(t,x\right)-\frac{1}{2}}$
satisfies $\mathbf{H1-H2}.$ Applying Zhang's theorem gives the existence
and uniqueness and bounds on $p$-moments for the solution of (\ref{eq:SEMequation}). 
\end{proof}
Next we will show the Hölder regularity for the solution of (\ref{eq:SEMequation}).
We will need some preliminary lemmas.
\begin{lem}
\label{lem:FundIneq}Let $T>u>v>0.$ Then, for any $\alpha\leq0$
and $\beta\in\left[0,1\right]$ we have 
\[
\left|u^{\alpha}-v^{\alpha}\right|\leq2^{1-\beta}\left|\alpha\right|^{\beta}\left|u-v\right|^{\beta}\left|v\right|^{\alpha-\beta},
\]
and for $\alpha\in\left(0,1\right)$
\[
\left|u^{\alpha}-v^{\alpha}\right|\leq\left|\alpha\right|\left|u-v\right|^{\alpha+\beta\left(1-\alpha\right)}\left|v\right|^{-\beta\left(1-\alpha\right)}.
\]
\end{lem}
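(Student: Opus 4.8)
The plan is to prove both inequalities in Lemma~\ref{lem:FundIneq} by reducing them to estimates on the one-dimensional function $f(t)=t^{\alpha}$ between the points $v$ and $u$. Since $T>u>v>0$, I would work on the interval $[v,u]$ and exploit that $f$ is smooth there with $f'(t)=\alpha t^{\alpha-1}$. The key observation is that the desired bounds interpolate between two trivial extremes: the H\"older exponent $\beta$ (respectively $\alpha+\beta(1-\alpha)$) sits between a crude bound valid for large $|u-v|$ and the sharp mean-value bound valid for small $|u-v|$, so the natural tool is to combine a Lipschitz-type estimate from the fundamental theorem of calculus with the elementary inequality $|a-b|\le |a|^{1-\beta}+\dots$ obtained by raising the mean-value estimate to a fractional power.

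For the first inequality, where $\alpha\le 0$, I would start from
\[
\left|u^{\alpha}-v^{\alpha}\right|=\left|\int_{v}^{u}\alpha t^{\alpha-1}\,dt\right|\le |\alpha|\,|u-v|\sup_{t\in[v,u]}t^{\alpha-1}.
\]
Because $\alpha-1<0$, the supremum is attained at the left endpoint $t=v$, giving the Lipschitz-type bound $|u^{\alpha}-v^{\alpha}|\le |\alpha|\,|u-v|\,v^{\alpha-1}$. Separately, the triangle inequality together with monotonicity of $t\mapsto t^{\alpha}$ yields the crude bound $|u^{\alpha}-v^{\alpha}|\le u^{\alpha}+v^{\alpha}\le 2v^{\alpha}$. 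The trick is then to raise the Lipschitz bound to the power $\beta$ and the crude bound to the power $1-\beta$ and multiply: this produces
\[
\left|u^{\alpha}-v^{\alpha}\right|\le \left(|\alpha|\,|u-v|\,v^{\alpha-1}\right)^{\beta}\left(2v^{\alpha}\right)^{1-\beta}=2^{1-\beta}|\alpha|^{\beta}|u-v|^{\beta}v^{(\alpha-1)\beta+\alpha(1-\beta)},
\]
and the exponent on $v$ simplifies to $\alpha-\beta$, matching the claim. Here I should be careful that $u^{\alpha}\le v^{\alpha}$ precisely because $\alpha\le 0$ and $u>v$, which is what makes the crude bound clean.

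For the second inequality, with $\alpha\in(0,1)$, the sign of $\alpha-1$ is still negative so the same mean-value computation gives $|u^{\alpha}-v^{\alpha}|\le \alpha\,|u-v|\,v^{\alpha-1}$, but now I cannot bound $u^{\alpha}$ by $v^{\alpha}$ since $f$ is increasing; instead I would use subadditivity of $t\mapsto t^{\alpha}$ for $\alpha\in(0,1)$, namely $u^{\alpha}-v^{\alpha}\le (u-v)^{\alpha}$, as the complementary crude bound. Interpolating with weights $1-\beta$ on the mean-value estimate and $\beta$ on the subadditivity estimate, I obtain
\[
\left|u^{\alpha}-v^{\alpha}\right|\le \left(\alpha\,|u-v|\,v^{\alpha-1}\right)^{1-\beta}\left((u-v)^{\alpha}\right)^{\beta}=\alpha^{1-\beta}|u-v|^{(1-\beta)+\alpha\beta}v^{(\alpha-1)(1-\beta)}.
\]
Relabeling $\beta\mapsto 1-\beta$ (which is harmless since the statement quantifies over all $\beta\in[0,1]$) rewrites the exponent of $|u-v|$ as $\alpha+\beta(1-\alpha)$ and the exponent of $v$ as $-\beta(1-\alpha)$, giving the stated form up to the prefactor $\alpha^{\beta}\le\alpha$; replacing $\alpha^{1-\beta}$ or $\alpha^{\beta}$ by $|\alpha|$ (valid since $\alpha<1$) absorbs the constant. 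The main obstacle I anticipate is bookkeeping rather than depth: tracking which endpoint realizes the supremum of $t^{\alpha-1}$, choosing the correct complementary elementary inequality (triangle inequality versus subadditivity) in each regime, and verifying that the interpolation weights and the $\beta\leftrightarrow 1-\beta$ relabeling reproduce the exact exponents claimed. I would double-check the edge cases $\beta=0$ and $\beta=1$ to confirm both inequalities degenerate correctly to the crude and the Lipschitz bounds respectively.
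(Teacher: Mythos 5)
Your derivation follows essentially the same route as the paper's proof: the mean-value/FTC bound $\left|u^{\alpha}-v^{\alpha}\right|\leq\left|\alpha\right|\left|u-v\right|v^{\alpha-1}$, a complementary crude bound, and the interpolation $X\leq a^{\beta}b^{1-\beta}$ whenever $0\leq X\leq\min\left(a,b\right)$. Your proof of the first inequality ($\alpha\leq0$) is correct and matches the paper's exactly, including the prefactor $2^{1-\beta}\left|\alpha\right|^{\beta}$. For the second inequality your only genuine deviation is in how you obtain the crude bound: you get $u^{\alpha}-v^{\alpha}\leq\left(u-v\right)^{\alpha}$ from subadditivity of $t\mapsto t^{\alpha}$, whereas the paper derives the same estimate from $\left(v+\theta\left(u-v\right)\right)^{\alpha-1}\leq\theta^{\alpha-1}\left(u-v\right)^{\alpha-1}$ inside the integral; both are elementary and yield identical exponents after your relabeling $\beta\mapsto1-\beta$, which is indeed harmless.

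The flaw is your final step. For $\alpha\in\left(0,1\right)$ and $\beta\in\left[0,1\right]$ one has $\alpha^{\beta}\geq\alpha$, not $\alpha^{\beta}\leq\alpha$ (the map $x\mapsto\alpha^{x}$ is decreasing when $0<\alpha<1$), so you cannot ``absorb'' the prefactor: replacing $\alpha^{\beta}$ by $\left|\alpha\right|$ makes the upper bound smaller, which is not a valid weakening. What your interpolation correctly proves is the second inequality with prefactor $\left|\alpha\right|^{\beta}$. In fact the statement as printed, with prefactor $\left|\alpha\right|$, is false for small $\beta$: take $\beta=0$, $\alpha=\frac{1}{2}$, $u=1$ and $v\rightarrow0^{+}$; the claim then reads $1-\sqrt{v}\leq\frac{1}{2}\left(1-v\right)^{1/2}$, which fails. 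You should know that the paper's own proof makes the identical silent substitution of $\left|\alpha\right|^{\beta}$ by $\left|\alpha\right|$ in its last display, so your argument actually reproduces the paper's reasoning faithfully, prefactor defect included. Since the lemma is only invoked (in Lemma \ref{lem:LambdaFunction}) up to multiplicative constants, the harmless repair is to state the second inequality with $\left|\alpha\right|^{\beta}$, or simply with constant $1$ using $\alpha^{\beta}\leq1$; with that emendation your proof is complete.
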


\begin{proof}
For $\alpha=0$ is clear. For $\alpha<1$ and $\alpha\neq0,$ using
the remainder of Taylor's formula in integral form we get
\begin{align}
\left|u^{\alpha}-v^{\alpha}\right| & =\left|\left(u-v\right)\int_{0}^{1}\alpha\left(v+\theta\left(u-v\right)\right)^{\alpha-1}\left(1-\theta\right)d\theta\right|\nonumber \\
 & \leq\left|\alpha\right|\left|u-v\right|\int_{0}^{1}\left|v+\theta\left(u-v\right)\right|^{\alpha-1}d\theta\leq\left|\alpha\right|\left|u-v\right|\left|v\right|^{\alpha-1},\label{eq: Bound1}
\end{align}
where we have used that $\left|v+\theta\left(u-v\right)\right|^{\alpha-1}\leq\left|v\right|^{\alpha-1}$.
Using that $\left|v+\theta\left(u-v\right)\right|^{\alpha-1}\leq\theta^{\alpha-1}\left|u-v\right|^{\alpha-1}$
and assuming that $\alpha\in\left(0,1\right)$ we obtain
\begin{equation}
\left|u^{\alpha}-v^{\alpha}\right|\leq\alpha\left|u-v\right|^{\alpha}\int_{0}^{1}\theta^{\alpha-1}d\theta=\left|u-v\right|^{\alpha}.\label{eq: Bound2}
\end{equation}
In what follows we will use the interpolation inequality $a\wedge b\leq a^{\beta}b^{1-\beta}$
for any $a,b>0$ and $\beta\in\left[0,1\right]$. 

Consider the case $\alpha<0.$ Using the interpolation inequality
with the simple bound $\left|u^{\alpha}-v^{\alpha}\right|\leq2\left|v\right|^{\alpha}$
and the bound (\ref{eq: Bound1}) we get
\begin{align*}
\left|u^{\alpha}-v^{\alpha}\right| & \leq2^{1-\beta}\left|\alpha\right|^{\beta}\left|u-v\right|^{\beta}\left|v\right|^{\beta\left(\alpha-1\right)}\left|v\right|^{(1-\beta)\alpha}=2^{1-\beta}\left|\alpha\right|^{\beta}\left|u-v\right|^{\beta}\left|v\right|^{\alpha-\beta}.
\end{align*}

Consider the case $\alpha\in\left(0,1\right)$. Using the interpolation
inequality with the bounds (\ref{eq: Bound1}) and (\ref{eq: Bound2})
we can write
\[
\left|u^{\alpha}-v^{\alpha}\right|\leq\left|\alpha\right|\left|u-v\right|^{\beta}\left|v\right|^{\beta\left(\alpha-1\right)}\left|u-v\right|^{\left(1-\beta\right)\alpha}=\left|\alpha\right|\left|u-v\right|^{\alpha+\beta\left(1-\alpha\right)}\left|v\right|^{-\beta\left(1-\alpha\right)}.
\]
\end{proof}
\begin{lem}
\label{lem:LambdaFunction}Let $\sigma\left(t,s,x\right)=\left(t-s\right)^{h(t,x)-\frac{1}{2}}$
and $h$ be a Hurst function with parameters $\left(h_{*},h^{*}\right)$.
Then, for any $0<\gamma<2h_{*}$ there exists $\lambda_{\gamma}:\triangle^{(3)}\left(\left[0,T\right]\right)\rightarrow\mathbb{R}$
such that 
\begin{equation}
\left|\sigma\left(t,s,x\right)-\sigma\left(t^{\prime},s,x\right)\right|^{2}\leq\lambda_{\gamma}\left(t,t^{\prime},s\right),\label{eq:time reg sigma}
\end{equation}
 and 
\begin{equation}
\int_{0}^{t^{\prime}}\lambda_{\gamma}\left(t,t^{\prime},s\right)ds\leq C_{T,\gamma}\left|t-t^{\prime}\right|^{\gamma},\label{eq:TimeLipschitzIntegralLambda}
\end{equation}
 for some constant $C_{T,\gamma}>0$.
\end{lem}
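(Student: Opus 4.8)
The plan is to split the increment $\sigma(t,s,x)-\sigma(t',s,x)$ into a part that only changes the \emph{base} of the power and a part that only changes the \emph{exponent}, and to bound the two parts separately. Writing $a=h(t,x)-\tfrac12$ and $b=h(t',x)-\tfrac12$, and recalling that on $\triangle^{(3)}([0,T])$ one has $0<t'-s<t-s$, I would use
\[
\sigma(t,s,x)-\sigma(t',s,x)=\big[(t-s)^{a}-(t'-s)^{a}\big]+\big[(t'-s)^{a}-(t'-s)^{b}\big],
\]
followed by the elementary inequality $(p+q)^2\le 2p^2+2q^2$, so that it suffices to bound the square of each bracket by an $s$-dependent, $x$-independent quantity. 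The candidate $\lambda_{\gamma}$ will be a constant times the sum of the two resulting bounds, so that the pointwise domination (\ref{eq:time reg sigma}) holds by construction and only the integral estimate (\ref{eq:TimeLipschitzIntegralLambda}) remains.

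For the exponent-changing bracket I would argue exactly as in the proof of Lemma \ref{lem:Well defined sigma}: writing $(t'-s)^{a}=\exp(a\log(t'-s))$ and applying (\ref{eq: expinequality}) gives
\[
\big|(t'-s)^{a}-(t'-s)^{b}\big|\le (t'-s)^{\max(a,b)}\,|\log(t'-s)|\,|a-b|.
\]
The time-Lipschitz property of the Hurst function yields $|a-b|=|h(t,x)-h(t',x)|\le C|t-t'|$, while for $t'-s\le 1$ one has $(t'-s)^{\max(a,b)}\le(t'-s)^{h_{*}-\frac12}$ since $\max(a,b)\ge h_{*}-\tfrac12$. Squaring produces a bound of the form $C(t'-s)^{2h_{*}-1}|\log(t'-s)|^{2}|t-t'|^{2}$, whose $s$-integral is precisely the finite quantity $\phi_{1}$ computed in Lemma \ref{lem:Well defined sigma}. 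Hence this part contributes $C_{T}|t-t'|^{2}$, and since $\gamma<2h_{*}<2$ and $|t-t'|\le T$, this is at most $C_{T,\gamma}|t-t'|^{\gamma}$.

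The base-changing bracket is the crux, and here I would invoke Lemma \ref{lem:FundIneq} with $u=t-s$, $v=t'-s$ (so $|u-v|=|t-t'|$ and $|v|=t'-s$) and exponent $\alpha=a$. Since $a=h(t,x)-\tfrac12$ ranges over $[h_{*}-\tfrac12,h^{*}-\tfrac12]$ and may have either sign, I would use the first inequality of Lemma \ref{lem:FundIneq} on the set where $a\le 0$ and the second where $a\in(0,1)$. Squaring gives, respectively, $C|t-t'|^{2\beta}(t'-s)^{2a-2\beta}$ and $C|t-t'|^{2a+2\beta(1-a)}(t'-s)^{-2\beta(1-a)}$, with $\beta\in[0,1]$ a free parameter. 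The goal is to choose $\beta$ so that, uniformly over the admissible exponents, the power of $(t'-s)$ exceeds $-1$ (so that $\int_{0}^{t'}(t'-s)^{\cdot}\,ds\le C_{T}$) while the power of $|t-t'|$ is at least $\gamma$.

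The delicate point, and the reason the hypothesis reads $\gamma<2h_{*}$, is exactly this choice of $\beta$. In the negative-exponent regime the worst case is $a=h_{*}-\tfrac12$, where integrability of $(t'-s)^{2a-2\beta}$ forces $\beta<h_{*}$ while matching the time power forces $\beta\ge\gamma/2$; both hold simultaneously precisely when $\gamma/2<h_{*}$. The positive-exponent regime requires an analogous balancing, and when $h_{*}>\tfrac12$ the negative case does not occur at all, which relaxes the integrability constraint; I therefore expect to treat $h_{*}\le\tfrac12$ and $h_{*}>\tfrac12$ separately, taking for instance $\beta=\gamma/2$ in the former. Once $\beta$ is fixed I would dominate the $x$-dependent exponents uniformly, using that $0<t'-s\le T$ so that negative powers are largest at the smallest base (e.g.\ $(t'-s)^{2a-2\beta}\le(t'-s)^{2h_{*}-1-2\beta}$ on $\{t'-s\le1\}$, and a constant on $\{t'-s>1\}$), assemble the pieces into $\lambda_{\gamma}$, and read off (\ref{eq:TimeLipschitzIntegralLambda}). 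The only genuine labor is this uniform-in-$x$ bookkeeping; the analytic content is already contained in Lemma \ref{lem:FundIneq} and in the $\phi_{1}$-type integral estimate established in Lemma \ref{lem:Well defined sigma}.
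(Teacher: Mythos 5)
Your proposal is correct and follows essentially the same route as the paper's proof: the same add-and-subtract decomposition into an exponent-changing and a base-changing bracket (merely mirrored, interpolating via $\left(t^{\prime}-s\right)^{h\left(t,x\right)-\frac{1}{2}}$ instead of $\left(t-s\right)^{h\left(t^{\prime},x\right)-\frac{1}{2}}$), the inequality $\left(\ref{eq: expinequality}\right)$ plus the time-Lipschitz property of $h$ for the former, and Lemma \ref{lem:FundIneq} with $u=t-s$, $v=t^{\prime}-s$ and a sign split on the exponent for the latter, with the same balancing of the interpolation parameter producing the constraint $\gamma<2h_{*}$. The only cosmetic difference is that the paper takes $\beta_{2}=\frac{1}{2\left(1-\alpha\right)}-\varepsilon$ adaptively in the positive-exponent regime rather than your fixed-$\beta$ split on $h_{*}\leq\frac{1}{2}$ versus $h_{*}>\frac{1}{2}$, but both choices close the argument identically.
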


\begin{proof}
We have that
\[
\sigma\left(t,s,x\right)-\sigma\left(t^{\prime},s,x\right)=\left(t-s\right)^{h\left(t,x\right)-\frac{1}{2}}-\left(t^{\prime}-s\right)^{h\left(t^{\prime},x\right)-\frac{1}{2}}.
\]
Furthermore, notice that for all $t>t^{\prime}>s>0$, we can add and
subtract the term $\left(t-s\right)^{h(t^{\prime},x)-\frac{1}{2}}$
to get 
\[
\sigma\left(t,s,x\right)-\sigma\left(t^{\prime},s,x\right)=J^{1}\left(t,t^{\prime},s,x\right)+J^{2}\left(t,t^{\prime},s,x\right),
\]
where 
\begin{align*}
J^{1}\left(t,t^{\prime},s,x\right) & :=\left(t-s\right)^{h(t,x)-\frac{1}{2}}-\left(t-s\right)^{h(t^{\prime},x)-\frac{1}{2}},\\
J^{2}\left(t,t^{\prime},s,x\right) & :=\left(t-s\right)^{h(t^{\prime},x)-\frac{1}{2}}-\left(t^{\prime}-s\right)^{h(t^{\prime},x)-\frac{1}{2}}.
\end{align*}
 First we bound $J^{1}$. Using the inequality $\left(\ref{eq: expinequality}\right)$
and that $h$ is Lipschitz in the time argument, by similar arguments
as in Lemma \ref{lem:Well defined sigma}, we obtain for any $\delta\in\left(0,1\right)$
\begin{align*}
\left|J^{1}\left(t,t^{\prime},s,x\right)\right| & \leq C_{T}\left|t-t^{\prime}\right|\left|t-s\right|^{h_{*}-\frac{1}{2}}\left|\log\left(t-s\right)\right|\\
 & \leq C_{T,\delta}\left|t-t^{\prime}\right|\left|t-s\right|^{h_{*}-\frac{1}{2}-\delta}\\
 & \leq C_{T,\delta}\left|t-t^{\prime}\right|\left|t'-s\right|^{h_{*}-\frac{1}{2}-\delta},
\end{align*}
 since $s<t^{\prime}<t$. Next, in order to bound the term $J^{2}$,
we apply Lemma \ref{lem:FundIneq} with $u=t-s$,$v=t'-s$ and $\alpha=h(t^{\prime},x)-\frac{1}{2}$
. Note that, since $h(t^{\prime},x)\in\left[h_{*},h^{*}\right]\subset\left(0,1\right)$,
$\alpha\in\left[h_{*}-\frac{1}{2},h^{*}-\frac{1}{2}\right]\subset\left(-\frac{1}{2},\frac{1}{2}\right)$.
Hence, if $\alpha=$$h(t^{\prime},x)-\frac{1}{2}\leq0$ (this implies
$h_{*}<1/2$ and $\alpha\left(-\frac{1}{2},0\right)$), we get
\begin{align*}
\left|J^{2}\left(t,t^{\prime},s,x\right)\right| & \leq2\left|t-t^{\prime}\right|^{\beta_{1}}\left|t^{\prime}-s\right|^{h(t^{\prime},x)-\frac{1}{2}-\beta_{1}}\\
 & \leq C_{T}\left|t-t^{\prime}\right|^{\beta_{1}}\left|t^{\prime}-s\right|^{h_{*}-\frac{1}{2}-\beta_{1}},
\end{align*}
for any $\beta_{1}\in\left(0,1\right)$. If $\alpha=h(t^{\prime},x)-\frac{1}{2}>0$
(this implies $h^{*}>1/2$ and $\alpha\in\left(0,\frac{1}{2}\right)$),
we get 
\begin{align*}
\left|J^{2}\left(t,t^{\prime},s,x\right)\right| & \leq\frac{1}{2}\left|t-t^{\prime}\right|^{\alpha+\beta_{2}\left(1-\alpha\right)}\left|t^{\prime}-s\right|^{-\beta_{2}\left(1-\alpha\right)}\\
 & \leq\frac{1}{2}\left|t-t^{\prime}\right|^{\alpha+\frac{1}{2}-\varepsilon\left(1-\alpha\right)}\left|t^{\prime}-s\right|^{-\frac{1}{2}+\varepsilon\left(1-\alpha\right)}\\
 & \leq\frac{1}{2}\left|t-t^{\prime}\right|^{h_{*}-\varepsilon}\left|t^{\prime}-s\right|^{-\frac{1}{2}+\frac{\varepsilon}{2}},
\end{align*}
where in the second inequality we have chosen $\beta_{2}=\frac{1}{2\left(1-\alpha\right)}-\varepsilon,\varepsilon>0,$
and in the third inequality we have used that $(1-\alpha)\in\left(\frac{1}{2},1\right)$.
Therefore, we can write the following bound
\begin{align*}
\left|\sigma\left(t,s,x\right)-\sigma\left(t^{\prime},s,x\right)\right|^{2} & \leq2\left(\left|J^{1}\left(t,t^{\prime},s,x\right)\right|^{2}+\left|J^{2}\left(t,t^{\prime},s,x\right)\right|^{2}\right)\\
 & \leq2\left(C_{T,\delta}\left|t-t^{\prime}\right|^{2}\left|t'-s\right|^{2h_{*}-1-2\delta}\right.\\
 & \left.\quad+C_{T}\left|t-t^{\prime}\right|^{2\beta_{1}}\left|t^{\prime}-s\right|^{2h_{*}-1-2\beta_{1}}+\frac{1}{2}\left|t-t^{\prime}\right|^{2h_{*}-2\varepsilon}\left|t^{\prime}-s\right|^{-1+\varepsilon}\right)\\
 & \leq C_{T,\beta_{1}}\left|t-t^{\prime}\right|^{2\beta_{1}}\left|t'-s\right|^{-1+h_{*}-\beta_{1}},
\end{align*}
where to get the last inequality we have chosen $\delta=\beta_{1}$
and $\varepsilon=h_{*}-\beta_{1}$. Therefore, defining 
\[
\lambda_{\gamma}\left(t,t^{\prime},s\right):=C_{T,\gamma}\left(t-t^{\prime}\right)^{\gamma}\left(t^{\prime}-s\right)^{-1+h_{*}-\frac{\gamma}{2}},
\]
and choosing $\gamma$ such that $0<\gamma<2h_{*},$ we can compute
\[
\int_{0}^{t^{\prime}}\lambda_{\gamma}\left(t,t^{\prime},s\right)ds\leq C_{T,\gamma}\left(t^{\prime}\right)^{h_{*}-\frac{\gamma}{2}}\left(t-t^{\prime}\right)^{\gamma},
\]
 which concludes the proof.
\end{proof}
\begin{prop}
\label{prop:HolderContSEM}Let $\left\{ X_{t}^{h}\right\} _{t\in\left[0,T\right]}$
be a SEM process defined in Theorem \ref{thm:Existence and unique},
and assume that $g$ satisfies $\mathbf{H4}$ for some $\delta>0$.
Then there exists a set of paths $\mathcal{N}\subset\Omega$ with
$\mathbb{P}\left(\mathcal{N}\right)=0$, such that for all $\omega\in\mathcal{N}^{c}$
the path $X_{t}^{h}\left(\omega\right)$ has $\alpha$-Hölder continuous
trajectories for any $\alpha<h_{*}\wedge\delta$. In particular, we
have 
\[
\left|\left(X_{t}^{h}-X_{s}^{h}\right)\left(\omega\right)\right|\leq C\left(\omega\right)\left|t-s\right|^{\alpha},\qquad\omega\in\mathcal{N}^{c}.
\]
\end{prop}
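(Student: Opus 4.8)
The plan is to establish the H\"older regularity via the Kolmogorov--Chentsov continuity theorem, so the goal reduces to proving a moment estimate of the form $\mathbb{E}\left[\left|X_t^h-X_{t'}^h\right|^p\right]\leq C\left|t-t'\right|^{\beta p}$ for all $p\geq 2$ and every $\beta<h_*\wedge\delta$. Kolmogorov's criterion then produces, for $p$ large enough that $\beta p>1$, a modification whose paths are $\alpha$-H\"older for every $\alpha<\beta-\frac1p$; letting $p\to\infty$ and $\beta\uparrow h_*\wedge\delta$ yields exactly the stated range $\alpha<h_*\wedge\delta$. The starting point is the decomposition, valid for $t>t'$,
\begin{align*}
X_t^h-X_{t'}^h &= \left(g(t)-g(t')\right)+\int_0^{t'}\left[\sigma(t,s,X_s^h)-\sigma(t',s,X_s^h)\right]dB_s+\int_{t'}^t\sigma(t,s,X_s^h)\,dB_s,
\end{align*}
with $\sigma(t,s,x)=(t-s)^{h(t,x)-\frac12}$, after which I would bound the $p$-th moment of each of the three terms separately.

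The first term is controlled directly by $\mathbf{H4}$, which gives $\mathbb{E}\left[\left|g(t)-g(t')\right|^p\right]\leq C_{T,p}\left|t-t'\right|^{\delta p}$. For the second term I would apply the Burkholder--Davis--Gundy inequality and then invoke Lemma \ref{lem:LambdaFunction}: since the bound $\left|\sigma(t,s,x)-\sigma(t',s,x)\right|^2\leq\lambda_\gamma(t,t',s)$ is \emph{uniform in} $x$, the resulting quadratic variation is bounded deterministically,
\[
\int_0^{t'}\left|\sigma(t,s,X_s^h)-\sigma(t',s,X_s^h)\right|^2ds\leq\int_0^{t'}\lambda_\gamma(t,t',s)\,ds\leq C_{T,\gamma}\left|t-t'\right|^\gamma,
\]
so the BDG bound yields a contribution of order $\left|t-t'\right|^{\gamma p/2}$, and choosing $\gamma$ arbitrarily close to $2h_*$ pushes the exponent up towards $h_*p$. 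The third, near-diagonal term is handled again by BDG together with the linear growth bound of Lemma \ref{lem:Well defined sigma}, namely $\left|\sigma(t,s,x)\right|^2\leq k(t,s)\left(1+\left|x\right|^2\right)$ with $k(t,s)=C_T(t-s)^{2h_*-1}$; writing the weighted integral against $d\mu(s)=(t-s)^{2h_*-1}ds$, whose total mass on $[t',t]$ is of order $\left|t-t'\right|^{2h_*}$, a Jensen (or Minkowski integral) inequality moves the power $p/2$ inside, after which the uniform moment bound $\sup_{s\in[0,T]}\mathbb{E}\left[\left|X_s^h\right|^p\right]<\infty$ — which follows from the estimate in Theorem \ref{thm:Existence and unique} under $\mathbf{H4}$ — gives a contribution of order $\left|t-t'\right|^{h_*p}$.

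Collecting the three estimates produces $\mathbb{E}\left[\left|X_t^h-X_{t'}^h\right|^p\right]\leq C\left(\left|t-t'\right|^{\delta p}+\left|t-t'\right|^{\gamma p/2}+\left|t-t'\right|^{h_*p}\right)$, and for increments bounded by $T$ the smallest exponent dominates; optimizing $\gamma\uparrow 2h_*$ this smallest exponent is $(h_*\wedge\delta)p$ up to an arbitrarily small loss, which is precisely the moment bound required to invoke Kolmogorov's theorem as described above. The genuinely delicate steps are the two stochastic-integral estimates, and of these the near-diagonal term is the main obstacle: it is the only place where the randomness of $X^h$ enters the kernel in an essential way, so one must combine the singular weight $(t-s)^{2h_*-1}$ with the a priori $p$-moment control of $X^h$ through a careful Jensen/Minkowski argument. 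By contrast the time-increment term is comparatively soft, precisely because Lemma \ref{lem:LambdaFunction} already furnishes an $x$-uniform, hence pathwise deterministic, bound.
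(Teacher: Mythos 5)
Your proof is correct and follows the same skeleton as the paper's: the same three-term decomposition of $X_t^h-X_{t'}^h$, the Burkholder--Davis--Gundy inequality combined with Lemma \ref{lem:LambdaFunction} for the time-increment integral over $[0,t']$, hypothesis \textbf{H4} for the $g$-increment, and Kolmogorov's continuity theorem with $p\to\infty$ to reach every $\alpha<h_*\wedge\delta$. (Incidentally, your decomposition with $\sigma(t',s,X_s^h)$ in the second integral is the correct one; the corresponding display in the paper's proof writes $h(t,X_r)$ in both exponents, a typo, though the lemma it invokes covers the correct difference.) The one place you genuinely diverge is the near-diagonal term $\int_{t'}^t\sigma(t,s,X_s^h)\,dB_s$, which you treat via the linear-growth bound $\left|\sigma(t,s,x)\right|^2\leq k(t,s)\left(1+\left|x\right|^2\right)$, a Jensen/Minkowski step, and the a priori $p$-moment bounds of Theorem \ref{thm:Existence and unique} (which are indeed available under \textbf{H4}). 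This works, but it is heavier than necessary, and your assessment that this is the delicate step where the randomness of $X^h$ enters essentially is backwards: the bound (\ref{eq:BoundSigma2_No_x}) established in Lemma \ref{lem:Well defined sigma} is uniform in $x$, since $h(t,x)\in[h_*,h^*]$ gives $\left(t-s\right)^{2h(t,X_s^h)-1}\leq T^{2(h^*-h_*)}\left(t-s\right)^{2h_*-1}$ pathwise. Hence, after BDG, the quadratic variation of the near-diagonal term is \emph{deterministically} bounded by $C_T\left|t-t'\right|^{2h_*}$ --- exactly the same uniformity-in-$x$ you exploited for the other stochastic integral --- and the paper dispatches this term in one line (its estimate (\ref{eq: J1})), with no moment estimates on $X^h$ needed anywhere in the proof. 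What your heavier route buys is robustness to kernels whose bounds genuinely depend on $x$: for the SEM-Gamma process, Lemma \ref{lem:LambdaFunctionGamma} carries a $\left(1+\left|x\right|^2\right)$ factor and the moment-plus-Minkowski mechanism you describe is then really required, so your argument generalizes directly where the paper's one-liner would not.
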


\begin{proof}
By theorem \ref{thm:Existence and unique}, there exists a unique
solution $X_{t}^{h}$ to Equation (\ref{eq:SEMequation}) with bounded
$p$-order moments. We will show that $X_{t}^{h}$ also have Hölder
continuous paths. To this end, we will show that for any $p\in\mathbb{N}$
there exists a constant $C>0$ and a function $\alpha$, both depending
on $p$, such that 
\[
\mathbb{E}\left[\left|X_{s,t}^{h}\right|^{2p}\right]\leq C_{p}\left|t-s\right|^{\alpha_{p}},
\]
where $X_{s,t}^{h}=X_{t}^{h}-X_{s}^{h}$. From this we apply Kolmogorov's
continuity theorem (e.g. Theorem 2.8 in \cite{KarShre}, page 53)
in order to obtain the claim. Note that the increment of $X_{s,t}$
minus the increment of $g$ satisfies 
\[
X_{s,t}^{h}-\left(g\left(t\right)-g\left(s\right)\right)=\int_{s}^{t}\left(t-r\right)^{h\left(t,X_{r}\right)-\frac{1}{2}}dB_{r}+\int_{0}^{s}\left(t-r\right)^{h\left(t,X_{r}\right)-\frac{1}{2}}-\left(s-r\right)^{h\left(t,X_{r}\right)-\frac{1}{2}}dB_{r},
\]
and thus using that 
\begin{equation}
\left|a+b\right|^{q}\leq2^{q-1}\left(\left|a\right|^{q}+\left|b\right|^{q}\right),\label{eq:simpleQineq}
\end{equation}
 for any $q\in\mathbb{N},$ we obtain 
\begin{align*}
\mathbb{E}\left[\left|X_{s,t}^{h}-\left(g\left(t\right)-g\left(s\right)\right)\right|^{2p}\right] & \leq C_{p}\mathbb{E}\left[\left|\int_{s}^{t}\left(t-r\right)^{h\left(t,X_{r}\right)-\frac{1}{2}}dB_{r}\right|^{2p}\right]\\
 & +C_{p}\mathbb{E}\left[\left|\int_{0}^{s}\left(t-r\right)^{h\left(t,X_{r}\right)-\frac{1}{2}}-\left(s-r\right)^{h\left(t,X_{r}\right)-\frac{1}{2}}dB_{r}\right|^{2}\right]\\
 & =:C_{p}\left(J_{s,t}^{1}+J_{s,t}^{2}\right).
\end{align*}
Clearly, as $h\left(t,x\right)\in\left[h_{*},h^{*}\right]\subset\left(0,1\right)$,
we have by the Burkholder-Davis-Gundy (BDG) inequality that 
\begin{align}
J_{s,t}^{1} & \leq C_{p}\mathbb{E}\left[\left|\int_{s}^{t}\left(t-r\right)^{2h\left(t,X_{r}\right)-1}dr\right|^{p}\right]\label{eq: J1}\\
 & \leq C_{p,T}\left|\int_{s}^{t}\left(t-r\right)^{2h_{*}-1}dr\right|^{p}=C_{p,T,h_{*}}\left|t-s\right|^{2ph_{*}}.\nonumber 
\end{align}
 Consider now the term $J_{s,t}^{2}$. Applying again BDG inequality
together with the bounds $\left(\ref{eq:time reg sigma}\right)$ and
$\left(\ref{eq:TimeLipschitzIntegralLambda}\right)$ from Lemma \ref{lem:LambdaFunction},
we have that for any $\gamma<2h_{*}$
\begin{align}
J_{s,t}^{2} & \leq C_{p}\mathbb{E}\left[\left|\int_{0}^{s}\left[\left(\left(t-r\right)^{h\left(t,X_{r}\right)-\frac{1}{2}}-\left(s-r\right)^{h\left(t,X_{r}\right)-\frac{1}{2}}\right)^{2}\right]dr\right|^{p}\right]\nonumber \\
 & \leq C_{p}\mathbb{E}\left[\left|\int_{0}^{s}\lambda_{\gamma}\left(t,s,r\right)dr\right|^{p}\right]\leq C_{p,T,\gamma}\left|t-s\right|^{p\gamma},\label{eq: J2}
\end{align}
 Combining (\ref{eq: J1}) and (\ref{eq: J2}) we can see that 
\[
\mathbb{E}\left[\left|X_{s,t}^{h}-\left(g\left(t\right)-g\left(s\right)\right)\right|^{2p}\right]\leq C_{p,T,\gamma}\left|t-s\right|^{p\gamma}.
\]
Furthermore, again using (\ref{eq:simpleQineq}) we see that 
\[
\mathbb{E}\left[\left|X_{s,t}\right|^{2p}\right]\leq2^{2p-1}\left(\mathbb{E}\left[\left|X_{s,t}^{h}-\left(g\left(t\right)-g\left(s\right)\right)\right|^{2p}\right]+\mathbb{E}\left[\left|\left(g\left(t\right)-g\left(s\right)\right)\right|^{2p}\right]\right).
\]
Thus invoking the bounds from $\left({\bf H4}\right)$ on $g$, we
obtain that 
\[
\mathbb{E}\left[\left|X_{s,t}\right|^{2p}\right]\leq C_{p,T,\gamma}\left|t-s\right|^{2p\left(\frac{\gamma}{2}\wedge\delta\right)},
\]
and it follows from Kolmogorov's continuity theorem that $X^{h}$
has $\mathbb{P}$-a.s. $\alpha$-Hölder continuous trajectories with
$\alpha\in\left(0,h_{*}\wedge\delta\right)$. 
\end{proof}

\section{Simulation of Self-Exciting Multifractional Stochastic Processes}

The aim of this section is to study a discretization scheme for self-excited
multifractional (SEM) processes proposed in the previous sections.
In particular we will consider an Euler type discretization and prove
that converges strongly to the original process at a rate depending
on $h_{*}$. We end the section providing two examples of numerical
simulations using the Euler discretization. 

\subsection{Euler-Maruyama Approximation Scheme}

Consider a time discretization of the interval $\left[0,T\right],$
using a step-size $\Delta t=\frac{T}{N}>0$. The discrete time Euler-Maruyama
scheme (EM) is given by 
\begin{align}
\bar{X}_{0}^{h} & =X_{0}^{h}=0\\
\bar{X}_{k}^{h} & =\sum_{i=0}^{k-1}\left(t_{k}-t_{i}\right)^{h\left(t_{k},\bar{X}_{i}^{h}\right)-\frac{1}{2}}\Delta B_{i},\qquad k\in\left\{ 1,\ldots,N\right\} ,\label{eq:EM_1}
\end{align}
where $\Delta B_{i}=B\left(t_{i+1}\right)-B\left(t_{i}\right)$, and
yields an approximation of $X_{t_{k}}^{h}$ for $t_{k}=k\Delta t$
with $k\in\left\{ 0,\ldots,N\right\} .$ 

In order to study the approximation error, it is convenient to consider
the continuous time interpolation of $\left\{ \bar{X}_{k}^{h}\right\} _{k\in\left\{ 0,\ldots,N\right\} }$
given by 
\begin{equation}
\bar{X}_{t}^{h}=\int_{0}^{t}\left(t-\eta\left(s\right)\right)^{h\left(t,\bar{X}_{\eta\left(s\right)}^{h}\right)-\frac{1}{2}}dB_{s},\qquad t\in\left[0,T\right],\label{eq:DefContEuler}
\end{equation}
where $\eta\left(s\right):=t_{i}\cdot\boldsymbol{1}_{\left[t_{i},t_{i+1}\right)}\left(s\right)$. 

The following theorem is the main result in this section and its proof
uses Lemmas \ref{lem:Bound=000026TimeLipschContEuler} and \ref{theo:VolterraGronwall},
see below.
\begin{thm}
\label{thm:EM_StrongConvergence} Let $h$ be a Hurst function with
parameters $\left(h_{*},h^{*}\right)$ and let $X_{t}^{h}$ be the
solution of equation $\left(\ref{eq:SEM}\right)$. Then the Euler-Maruyama
scheme $\left(\ref{eq:DefContEuler}\right)$, satisfies 
\begin{equation}
\sup_{0\leq t\leq T}\mathbb{E}\left[\left|X_{t}^{h}-\bar{X}_{t}^{h}\right|^{2}\right]\leq C_{T,\gamma,h_{*}}E_{h_{*}}\left(C_{T,\gamma,h_{*}}\Gamma\left(h_{*}\right)T^{h_{*}}\right)\left|\Delta t\right|^{\gamma},\label{eq:EM_2}
\end{equation}
 where $\gamma\in\left(0,2h_{*}\right)$, and $C_{T,\gamma,h_{*}}$
is a positive constant, which does not depend on $N$.
\end{thm}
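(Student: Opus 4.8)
The plan is to exploit that the target estimate is in $L^{2}$, so that Itô's isometry can be used in place of the Burkholder--Davis--Gundy inequality employed for the H\"older regularity. Set $e(t):=\mathbb{E}\left[\left|X_{t}^{h}-\bar{X}_{t}^{h}\right|^{2}\right]$ and subtract the two Volterra representations, noting the crucial structural fact that the integrands of $X^{h}$ and of the continuous Euler scheme $\left(\ref{eq:DefContEuler}\right)$ both carry the Hurst exponent evaluated at the \emph{same} first time argument $t$, so that only the kernel base and the spatial argument differ (hence Lemma \ref{lem:LambdaFunction} is not needed here). Since both integrands are adapted, Itô's isometry gives
\[
e(t)=\int_{0}^{t}\mathbb{E}\left[\left|(t-s)^{h(t,X_{s}^{h})-\frac{1}{2}}-(t-\eta(s))^{h(t,\bar{X}_{\eta(s)}^{h})-\frac{1}{2}}\right|^{2}\right]ds.
\]
I would then add and subtract $(t-s)^{h(t,\bar{X}_{\eta(s)}^{h})-\frac{1}{2}}$ inside the integrand and use $\left|a+b\right|^{2}\le2\left|a\right|^{2}+2\left|b\right|^{2}$ to split the estimate into a spatial part and a kernel-base part.

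For the spatial part I would apply the Lipschitz estimate $\left(\ref{eq:Lipschitz sigma}\right)$ of Lemma \ref{lem:Well defined sigma}, which produces the singular, logarithmically weighted kernel $C_{T}(t-s)^{2h_{*}-1}\left|\log(t-s)\right|^{2}$ multiplying $\mathbb{E}\left[\left|X_{s}^{h}-\bar{X}_{\eta(s)}^{h}\right|^{2}\right]$. Writing $X_{s}^{h}-\bar{X}_{\eta(s)}^{h}=(X_{s}^{h}-\bar{X}_{s}^{h})+(\bar{X}_{s}^{h}-\bar{X}_{\eta(s)}^{h})$ reduces this to $e(s)$ plus the one-step increment of the scheme, which is controlled by Lemma \ref{lem:Bound=000026TimeLipschContEuler} by a power of $\left|s-\eta(s)\right|\le\left|\Delta t\right|$. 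For the kernel-base part the exponent $\alpha=h(t,\bar{X}_{\eta(s)}^{h})-\frac{1}{2}$ is identical in both terms and lies in $\left(-\frac{1}{2},\frac{1}{2}\right)$, while the bases satisfy $t-\eta(s)>t-s>0$; hence Lemma \ref{lem:FundIneq} applies with $u=t-\eta(s)$, $v=t-s$ and $\left|u-v\right|\le\left|\Delta t\right|$, giving after squaring a factor $\left|\Delta t\right|^{2\beta}(t-s)^{2\alpha-2\beta}$.

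Integrating these bounds over $s\in[0,t]$ and choosing the interpolation parameter so that the power of $\left|\Delta t\right|$ equals $\gamma$ (which forces $\gamma<2h_{*}$, precisely the admissible range in the statement) yields a linear singular Volterra inequality of the form
\[
e(t)\le C_{T,\gamma,h_{*}}\left|\Delta t\right|^{\gamma}+C_{T,h_{*}}\int_{0}^{t}(t-s)^{2h_{*}-1}\left|\log(t-s)\right|^{2}\,e(s)\,ds.
\]
The decisive manipulation here is to absorb the logarithm together with one factor $(t-s)^{h_{*}}$ into a constant, using $(t-s)^{2h_{*}-1}\left|\log(t-s)\right|^{2}\le C_{T,h_{*}}(t-s)^{h_{*}-1}$ on $[0,T]$, so that the residual kernel has fractional order exactly $h_{*}$. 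Applying the Volterra--Gr\"onwall Lemma \ref{theo:VolterraGronwall} then produces the Mittag--Leffler factor $E_{h_{*}}\left(C_{T,\gamma,h_{*}}\Gamma(h_{*})T^{h_{*}}\right)$ and the claimed bound, uniformly in $t\in[0,T]$ and hence independent of $N$.

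The main obstacle is the exponent bookkeeping at the two places where the integrand is singular: near $s\approx t$, where $t-s\to0$, and near the grid points, where $s-\eta(s)$ is small. The interpolation parameter in Lemma \ref{lem:FundIneq} and the power loss used to dominate the logarithm must be chosen simultaneously so that every resulting exponent of $(t-s)$ stays strictly above $-1$ (for integrability) while the exponent of $\left|\Delta t\right|$ is pushed as close to $2h_{*}$ as desired; this is also where the subcase split $h_{*}<\frac{1}{2}$ versus $h^{*}>\frac{1}{2}$ of Lemma \ref{lem:FundIneq} must be carried through. A secondary point is to verify that Lemma \ref{lem:Bound=000026TimeLipschContEuler} delivers a one-step increment bound whose $\left|\Delta t\right|$-power is at least $\gamma$, so that the increment term is genuinely absorbed into the inhomogeneous part of the inequality rather than into the Gr\"onwall kernel.
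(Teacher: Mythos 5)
Your proposal is correct and is essentially the paper's own argument: the identical add-and-subtract decomposition into a spatial term and a kernel-base term, It\^o isometry with the Lipschitz bound $\left(\ref{eq:Lipschitz sigma}\right)$, the split $X_{s}^{h}-\bar{X}_{\eta\left(s\right)}^{h}=\left(X_{s}^{h}-\bar{X}_{s}^{h}\right)+\left(\bar{X}_{s}^{h}-\bar{X}_{\eta\left(s\right)}^{h}\right)$ controlled via Lemma \ref{lem:Bound=000026TimeLipschContEuler}, absorption of the logarithm into $\left(t-s\right)^{h_{*}-1}$, and the fractional Gronwall Theorem \ref{theo:VolterraGronwall} producing the Mittag--Leffler factor. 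The only cosmetic deviation is that you re-derive the kernel-base estimate directly from Lemma \ref{lem:FundIneq}, whereas the paper invokes the packaged bound of Lemma \ref{lem:LambdaFunction} with the shifted argument $\lambda_{\gamma}\left(t+\left(s-\eta\left(s\right)\right),t,s\right)$ --- the same computation, since only the $J^{2}$ part of that lemma is active here.
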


\begin{proof}
Define 
\[
\delta_{t}:=X_{t}^{h}-\bar{X}_{t}^{h},\qquad\varphi\left(t\right):=\sup_{0\leq s\leq t}\mathbb{E}\left[\left|\delta_{s}\right|^{2}\right],\quad t\in\left[0,T\right].
\]
For any $t\in\left[0,T\right],$ we can write
\begin{align*}
\delta_{t} & =\int_{0}^{t}\left(\left(t-s\right)^{h\left(t,X_{s}^{h}\right)-\frac{1}{2}}-\left(t-\eta\left(s\right)\right)^{h\left(t,\bar{X}_{\eta\left(s\right)}^{h}\right)-\frac{1}{2}}\right)dB_{s}\\
 & =\int_{0}^{t}\left(\left(t-s\right)^{h\left(t,X_{s}^{h}\right)-\frac{1}{2}}-\left(t-s\right)^{h\left(t,\bar{X}_{\eta\left(s\right)}^{h}\right)-\frac{1}{2}}\right)dB_{s}\\
 & \quad+\int_{0}^{t}\left(\left(t-s\right)^{h\left(t,\bar{X}_{\eta\left(s\right)}^{h}\right)-\frac{1}{2}}-\left(t-\eta\left(s\right)\right)^{h\left(t,\bar{X}_{\eta\left(s\right)}^{h}\right)-\frac{1}{2}}\right)dB_{s}\\
 & =:I_{1}\left(t\right)+I_{2}\left(t\right).
\end{align*}
First we bound the second moment of $I_{1}\left(t\right)$ in terms
of a Volterra integral of $\varphi$. Using the Itô isometry, equation
$\left(\ref{eq:Lipschitz sigma}\right)$ and the Lipschitz property
of $h$ we get 
\begin{align*}
\mathbb{E}\left[\left|I_{1}\left(t\right)\right|^{2}\right] & \leq\int_{0}^{t}k\left(t,s\right)\left(\log\left(t-s\right)\right)^{2}\mathbb{E}\left[\left(h\left(t,X_{s}^{h}\right)-h\left(t,\bar{X}_{\eta\left(s\right)}^{h}\right)\right)^{2}\right]ds\\
 & \leq C_{T,\delta}\int_{0}^{t}\left(t-s\right)^{2\left(h_{*}-\delta\right)-1}\mathbb{E}\left[\left|X_{s}^{h}-\bar{X}_{\eta\left(s\right)}^{h}\right|^{2}\right]ds,
\end{align*}
for $\delta>0$, arbitrarily small. By adding and subtracting $\bar{X}_{s}^{h}$,
we easily get that
\[
\mathbb{E}\left[\left|X_{s}^{h}-\bar{X}_{\eta\left(s\right)}^{h}\right|^{2}\right]\leq2\varphi\left(s\right)+2\mathbb{E}\left[\left|\bar{X}_{s}^{h}-\bar{X}_{\eta\left(s\right)}^{h}\right|^{2}\right],
\]
 Moreover, combining equation $\left(\ref{eq:SecondMomentTLCE}\right)$
in Lemma \ref{lem:Bound=000026TimeLipschContEuler}, yields
\[
\int_{0}^{t}\left(t-s\right)^{2\left(h_{*}-\delta\right)-1}\mathbb{E}\left[\left|\bar{X}_{s}^{h}-\bar{X}_{\eta\left(s\right)}^{h}\right|^{2}\right]ds\leq C_{T}\frac{T^{2\left(h_{*}-\delta\right)}}{2\left(h_{*}-\delta\right)}\left|\Delta t\right|^{\gamma}.
\]
Therefore, choosing $\delta=\frac{h_{*}}{2}$
\begin{equation}
\mathbb{E}\left[\left|I_{1}\right|^{2}\right]\leq C_{T,h_{*}}\left\{ \int_{0}^{t}\left(t-s\right)^{h_{*}-1}\varphi\left(s\right)ds+\left|\Delta t\right|^{\gamma}\right\} .\label{eq:I1Bound}
\end{equation}
Next, we find a bound for the second moment of $I_{2}\left(t\right)$.
Using again the Itô isometry, equations $\left(\ref{eq:time reg sigma}\right)$
and $\left(\ref{eq:TimeLipschitzIntegralLambda}\right),$ and Lemma
\ref{lem:Bound=000026TimeLipschContEuler} we can write 
\begin{align}
\mathbb{E}\left[\left|I_{2}\left(t\right)\right|^{2}\right] & \leq\int_{0}^{t}\lambda_{\gamma}\left(t+\left(s-\eta\left(s\right)\right),t,s\right)ds\leq C_{T,\gamma}\left|\Delta t\right|^{\gamma},\label{eq:I2Bound}
\end{align}
for any $\gamma<2h_{*}$. Combining the inequalities $\left(\ref{eq:I1Bound}\right)$
and $\left(\ref{eq:I2Bound}\right)$ we obtain
\[
\varphi\left(t\right)\leq C_{T,\gamma,h_{*}}\left\{ \int_{0}^{t}\left(t-s\right)^{h_{*}-1}\varphi\left(s\right)ds+\left|\Delta t\right|^{\gamma}\right\} .
\]
Using Theorem \ref{theo:VolterraGronwall} with $a\left(t\right)=C_{T,\gamma,h_{*}}\left|\Delta t\right|^{\gamma},$$g\left(t\right)=C_{T,\gamma,h_{*}}$
and $\beta=h_{*}$ we can conclude that 
\[
\varphi\left(T\right)\leq C_{T,\gamma,h_{*}}E_{h_{*}}\left(C_{T,\gamma,h_{*}}\Gamma\left(h_{*}\right)T^{h_{*}}\right)\left|\Delta t\right|^{\gamma}.
\]
\end{proof}
\begin{rem}
In \cite{Zha08}, Zhang introduced an Euler type scheme for stochastic
differential equations of Volterra type and showed that his scheme
converges at a certain positive rate, without being very precise.
A direct application of his result to our case provides a worse rate
than the one we obtain in Theorem \ref{thm:EM_StrongConvergence}.
The reason being that, due to our particular kernel, we are able to
use a fractional Gronwall lemma.
\end{rem}

\begin{lem}
\label{lem:Bound=000026TimeLipschContEuler} Let $h$ be a Hurst function
with parameters $\left(h_{*},h^{*}\right)$ and let $\bar{X}^{h}=\left\{ \bar{X}_{t}^{h}\right\} _{t\in\left[0,T\right]}$
be given by $\left(\ref{eq:DefContEuler}\right)$. Then
\begin{equation}
\mathbb{E}\left[\left|\bar{X}_{t}^{h}\right|^{2}\right]\leq C_{T},\quad0\leq t\leq T,\label{eq:SecondMomentContEuler}
\end{equation}
and 
\begin{equation}
\mathbb{E}\left[\left|\bar{X}_{t}^{h}-\bar{X}_{t^{\prime}}^{h}\right|^{2}\right]\leq C_{T,\gamma}\left|t-t^{\prime}\right|^{\gamma},\quad0\leq t^{\prime}\leq t\leq T,\label{eq:SecondMomentTLCE}
\end{equation}
 for any $\gamma<2h_{*}$, where $C_{T}$ and $C_{T,\gamma}$ are
positive constants.
\end{lem}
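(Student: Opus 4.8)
The plan is to handle both estimates with the It\^o isometry, thereby reducing everything to \emph{deterministic} integrals of the (possibly singular) kernel, and then to reuse the kernel bounds already established in Lemmas \ref{lem:Well defined sigma} and \ref{lem:LambdaFunction}. The one structural feature to track throughout is that $\eta(s)\leq s$, so that $t-\eta(s)\geq t-s>0$ for $s<t$; since the exponent $2h_{*}-1$ is negative exactly when $h_{*}<\frac12$, this monotonicity lets me dominate the Euler-interpolated integrands by the standard singular integrands whose integrals I already control.

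For the moment bound \eqref{eq:SecondMomentContEuler} I would apply the It\^o isometry to \eqref{eq:DefContEuler}, giving $\mathbb{E}[|\bar X_t^h|^2]=\int_0^t\mathbb{E}\big[(t-\eta(s))^{2h(t,\bar X_{\eta(s)}^h)-1}\big]\,ds$. Pointwise in $\omega$, the inequality \eqref{eq:BoundSigma2_No_x} (with $t-s$ replaced by $t-\eta(s)$) yields $(t-\eta(s))^{2h(t,x)-1}\leq T^{2(h^{*}-h_{*})}(t-\eta(s))^{2h_{*}-1}$, so it remains to bound $\int_0^t(t-\eta(s))^{2h_{*}-1}\,ds$ uniformly in $t$. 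When $h_{*}\geq\frac12$ the integrand is bounded by $T^{2h_{*}-1}$; when $h_{*}<\frac12$ the monotonicity above gives $(t-\eta(s))^{2h_{*}-1}\leq(t-s)^{2h_{*}-1}$, and $\int_0^t(t-s)^{2h_{*}-1}\,ds=\frac{t^{2h_{*}}}{2h_{*}}$. In either case the integral is at most $C_T$, which is \eqref{eq:SecondMomentContEuler}.

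For the increment bound \eqref{eq:SecondMomentTLCE} with $t'\leq t$, I would split $\bar X_t^h-\bar X_{t'}^h=A_1+A_2$, where $A_1=\int_{t'}^t(t-\eta(s))^{h(t,\bar X_{\eta(s)}^h)-\frac12}dB_s$ collects the ``new'' interval $[t',t]$ and $A_2=\int_0^{t'}\big((t-\eta(s))^{h(t,\bar X_{\eta(s)}^h)-\frac12}-(t'-\eta(s))^{h(t',\bar X_{\eta(s)}^h)-\frac12}\big)dB_s$ collects the change of the kernel. The term $A_1$ is treated exactly as in the moment bound, restricted to $[t',t]$: the It\^o isometry and \eqref{eq:BoundSigma2_No_x} reduce $\mathbb{E}[|A_1|^2]$ to $\int_{t'}^t(t-\eta(s))^{2h_{*}-1}\,ds$, which is of order $(t-t')^{2h_{*}\wedge 1}$; in the singular regime $h_{*}<\frac12$ this is $\leq C_T(t-t')^{2h_{*}}\leq C_{T,\gamma}(t-t')^{\gamma}$ because $\gamma<2h_{*}$ and $t-t'\leq T$. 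The genuinely delicate term is $A_2$, but it is precisely the object constructed in Lemma \ref{lem:LambdaFunction}: its integrand is $\sigma(t,\eta(s),\bar X_{\eta(s)}^h)-\sigma(t',\eta(s),\bar X_{\eta(s)}^h)$, so by the It\^o isometry and \eqref{eq:time reg sigma} one gets $\mathbb{E}[|A_2|^2]\leq\int_0^{t'}\lambda_{\gamma}(t,t',\eta(s))\,ds$. Inserting $\lambda_{\gamma}(t,t',\eta(s))=C_{T,\gamma}(t-t')^{\gamma}(t'-\eta(s))^{-1+h_{*}-\frac{\gamma}{2}}$ and using once more $(t'-\eta(s))^{-1+h_{*}-\frac{\gamma}{2}}\leq(t'-s)^{-1+h_{*}-\frac{\gamma}{2}}$ (the exponent lies in $(-1,0)$ precisely for $0<\gamma<2h_{*}$) makes the remaining integral finite and bounded by $C_T$, so $\mathbb{E}[|A_2|^2]\leq C_{T,\gamma}(t-t')^{\gamma}$. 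Combining through $\mathbb{E}[|\bar X_t^h-\bar X_{t'}^h|^2]\leq 2(\mathbb{E}[|A_1|^2]+\mathbb{E}[|A_2|^2])$ gives \eqref{eq:SecondMomentTLCE}.

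The main obstacle is the singular case $h_{*}<\frac12$: there the integrability of both $A_1$ and $A_2$ hinges on the comparison $t-\eta(s)\geq t-s$ and on the restriction $\gamma<2h_{*}$, which is exactly what keeps the exponents $2h_{*}-1$ and $-1+h_{*}-\frac{\gamma}{2}$ in the integrable range $(-1,0)$. The $A_2$ contribution would be the hard part from scratch, but since it is already packaged by Lemma \ref{lem:LambdaFunction}, the remaining effort is essentially the bookkeeping needed to replace $s$ by $\eta(s)$ and to verify that the resulting deterministic integrals stay finite.
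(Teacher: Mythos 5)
Your proposal is correct and follows essentially the same route as the paper's proof: It\^o isometry, the split of $\bar{X}_{t}^{h}-\bar{X}_{t^{\prime}}^{h}$ into the integral over $[t^{\prime},t]$ and the kernel-difference integral over $[0,t^{\prime}]$, the comparison $\eta\left(s\right)\leq s$ to dominate $k\left(t,\eta\left(s\right)\right)$ by $k\left(t,s\right)$ and $\lambda_{\gamma}\left(t,t^{\prime},\eta\left(s\right)\right)$ by $\lambda_{\gamma}\left(t,t^{\prime},s\right)$, and the bound $\left(\ref{eq:TimeLipschitzIntegralLambda}\right)$ from Lemma \ref{lem:LambdaFunction} for the second piece. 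Your explicit case distinction at $h_{*}\gtrless\frac{1}{2}$ is in fact slightly more careful than the paper, which applies $k\left(t,\eta\left(s\right)\right)\leq k\left(t,s\right)$ without noting it requires $2h_{*}-1\leq0$; otherwise the two arguments coincide.
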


\begin{proof}
Recall that $k\left(t,s\right)=C_{T}\left(t-s\right)^{2h_{*}-1}$
and, since $\eta\left(s\right)\leq s$, we have the following inequality

\begin{equation}
k\left(t,\eta\left(s\right)\right)\leq k\left(t,s\right).\label{eq: Volterrabound}
\end{equation}
Using the Itô isometry, equation $\left(\ref{eq:BoundSigma2_No_x}\right)$
and equation (\ref{eq: Volterrabound}), we obtain
\begin{align*}
\mathbb{E}\left[\left|\bar{X}_{t}^{h}\right|^{2}\right] & =\mathbb{E}\left[\int_{0}^{t}\left(t-\eta\left(s\right)\right)^{2h\left(t,\bar{X}_{\eta\left(s\right)}^{h}\right)-1}ds\right]\\
 & \leq\int_{0}^{t}k\left(t,\eta\left(s\right)\right)ds\leq\int_{0}^{t}k\left(t,s\right)ds\leq C_{T}.
\end{align*}
To prove the bound $\left(\ref{eq:SecondMomentTLCE}\right),$ note
that
\begin{align*}
\bar{X}_{t}^{h}-\bar{X}_{t^{\prime}}^{h} & =\int_{t^{\prime}}^{t}\left(t-\eta\left(s\right)\right)^{h\left(t,\bar{X}_{\eta\left(s\right)}^{h}\right)-\frac{1}{2}}dB_{s},\\
 & \quad+\int_{0}^{t^{\prime}}\left\{ \left(t-\eta\left(s\right)\right)^{h\left(t,\bar{X}_{\eta\left(s\right)}^{h}\right)-\frac{1}{2}}-\left(t^{\prime}-\eta\left(s\right)\right)^{h\left(t^{\prime},\bar{X}_{\eta\left(s\right)}^{h}\right)-\frac{1}{2}}\right\} dB_{s}\\
 & =:J_{1}+J_{2}.
\end{align*}
 Due to the Itô isometry, equation $\left(\ref{eq:BoundSigma2_No_x}\right)$
and $\left(\ref{eq: Volterrabound}\right)$, we obtain the bounds

\begin{align*}
\mathbb{E}\left[\left|J_{1}\right|^{2}\right] & =\mathbb{E}\left[\int_{t^{\prime}}^{t}\left(t-\eta\left(s\right)\right)^{2h\left(t,\bar{X}_{\eta\left(s\right)}^{h}\right)-1}ds\right]\\
 & \leq\int_{t'}^{t}k\left(t,\eta\left(s\right)\right)ds\leq\int_{t'}^{t}k\left(t,s\right)ds=C_{T}\left|t-t^{\prime}\right|^{2h_{*}}.
\end{align*}
Using again the Itô isometry, equation $\left(\ref{eq:time reg sigma}\right)$
and equation $\left(\ref{eq:TimeLipschitzIntegralLambda}\right)$
we can write, for any $\gamma<2h_{*},$ that
\begin{align*}
\mathbb{E}\left[\left|J_{2}\right|^{2}\right] & \leq\int_{0}^{t^{\prime}}\lambda_{\gamma}\left(t,t^{\prime},\eta\left(s\right)\right)ds\leq\int_{0}^{t^{\prime}}\lambda_{\gamma}\left(t,t^{\prime},s\right)ds\leq C_{T,\gamma}\left|t-t^{\prime}\right|^{\gamma},
\end{align*}
where in the second inequality we have used $\lambda_{\gamma}\left(t,t^{\prime},\eta\left(s\right)\right)\leq\lambda_{\gamma}\left(t,t^{\prime},s\right)$,
because $\lambda_{\gamma}$ is essentially a negative fractional power
of $(t-s)$ and $\eta\left(s\right)\leq s$. Combining the bounds
for $\mathbb{E}\left[\left|J_{1}\right|^{2}\right]$ and $\mathbb{E}\left[\left|J_{2}\right|^{2}\right]$
the result follows.
\end{proof}
The following result is a combination of Theorem 1 and Corollary 2
in \cite{YeGaoDing07}.
\begin{thm}
\label{theo:VolterraGronwall}Suppose $\beta>0,$$a\left(t\right)$
is a nonnegative function locally integrable on $0\leq t<T<+\infty$
and $g\left(t\right)$ is a nonnegative, nondecreasing continuous
function defined on $0\leq t<T$, $g\left(t\right)\leq M$ (constant),
and suppose $u\left(t\right)$ is nonnegative and locally integrable
on $0\leq t<T$ with 
\[
u\left(t\right)\leq a\left(t\right)+g\left(t\right)\int_{0}^{t}\left(t-s\right)^{\beta-1}u\left(s\right)ds,
\]
on this interval. Then,
\[
u\left(t\right)\leq a\left(t\right)+\int_{0}^{t}\left(\sum_{n=1}^{\infty}\frac{\left(g\left(t\right)\Gamma\left(\beta\right)\right)^{n}}{\Gamma\left(n\beta\right)}\left(t-s\right)^{n\beta-1}a\left(s\right)\right)ds,\qquad0\leq t<T.
\]
If in addition, $a\left(t\right)$ is a nondecreasing function on
$\left[0,T\right)$. Then,
\[
u\left(t\right)\leq a\left(t\right)E_{\beta}\left(g\left(t\right)\Gamma\left(\beta\right)t^{\beta}\right),
\]
where $E_{\beta}$ is the Mittag-Leffler function defined by $E_{\beta}\left(z\right)=\sum_{k=0}^{\infty}\frac{z^{k}}{\Gamma\left(k\beta+1\right)}$.
\end{thm}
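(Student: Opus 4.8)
The plan is to prove this by iterating the integral inequality and controlling the iterated kernels by Beta-function identities. Introduce the (nonlinear in data, but linear in $u$) Volterra operator
\[
(Bv)(t):=g(t)\int_{0}^{t}(t-s)^{\beta-1}v(s)\,ds,
\]
so that the hypothesis reads $u\le a+Bu$ pointwise. Iterating $n$ times and using nonnegativity of $a$, $g$, $u$ gives
\[
u(t)\le\sum_{k=0}^{n-1}(B^{k}a)(t)+(B^{n}u)(t),
\]
and the whole proof reduces to (i) identifying $B^{k}a$, (ii) showing the remainder $(B^{n}u)(t)\to0$, and (iii) passing to the limit. The crucial role of the monotonicity of $g$ is that at each application of $B$ the inner factor $g(s)$ with $s\le t$ can be replaced by $g(t)$, which lets me pull a clean power $g(t)^{n}$ outside.

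The technical heart is the inductive estimate
\[
(B^{n}a)(t)\le g(t)^{n}\,\frac{\Gamma(\beta)^{n}}{\Gamma(n\beta)}\int_{0}^{t}(t-s)^{n\beta-1}a(s)\,ds.
\]
First I would establish it for $n=1$ (it is the definition) and then run the induction: apply $B$ once more, bound $g(s)^{n}\le g(t)^{n}$ for $s\le t$, interchange the order of integration by Tonelli, and evaluate the inner integral as a Beta integral,
\[
\int_{\tau}^{t}(t-s)^{\beta-1}(s-\tau)^{n\beta-1}\,ds=(t-\tau)^{(n+1)\beta-1}\,\frac{\Gamma(\beta)\Gamma(n\beta)}{\Gamma((n+1)\beta)},
\]
which is exactly what makes the $\Gamma(n\beta)$ factors telescope and reproduces the claimed form at level $n+1$. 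This Beta-function cancellation, together with the correct use of the monotonicity of $g$, is the step I expect to be the main obstacle and the one requiring the most care.

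For the remainder I would use the uniform bound $g\le M$ and the same iterated-kernel estimate to obtain $(B^{n}u)(t)\le (M\Gamma(\beta))^{n}\Gamma(n\beta)^{-1}\int_{0}^{t}(t-s)^{n\beta-1}u(s)\,ds$; for $n$ large enough that $n\beta-1>0$ this is at most $(M\Gamma(\beta)t^{\beta})^{n}\Gamma(n\beta)^{-1}t^{-1}\|u\|_{L^{1}[0,t]}$, which tends to $0$ because $\Gamma(n\beta)$ grows faster than any geometric sequence (equivalently, the Mittag-Leffler series converges). Letting $n\to\infty$ and interchanging sum and integral by monotone convergence (all summands are nonnegative) yields
\[
u(t)\le a(t)+\int_{0}^{t}\left(\sum_{n=1}^{\infty}\frac{(g(t)\Gamma(\beta))^{n}}{\Gamma(n\beta)}(t-s)^{n\beta-1}\right)a(s)\,ds,
\]
which is the first assertion. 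For the second, assuming $a$ nondecreasing I would bound $a(s)\le a(t)$ inside the integral, integrate the series term by term using $\int_{0}^{t}(t-s)^{n\beta-1}ds=t^{n\beta}/(n\beta)$ and the identity $n\beta\,\Gamma(n\beta)=\Gamma(n\beta+1)$, and recognize the resulting series as $a(t)\sum_{n\ge0}(g(t)\Gamma(\beta)t^{\beta})^{n}/\Gamma(n\beta+1)=a(t)E_{\beta}(g(t)\Gamma(\beta)t^{\beta})$, completing the proof.
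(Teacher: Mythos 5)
Your proposal is correct and reproduces, step for step, the standard argument of Ye, Gao and Ding, which is precisely the source the paper relies on for this statement (the paper offers no proof of its own, citing Theorem 1 and Corollary 2 of \cite{YeGaoDing07}). The iteration $u\le\sum_{k=0}^{n-1}B^{k}a+B^{n}u$, the inductive Beta-integral estimate using $g(s)\le g(t)$ to telescope the $\Gamma(n\beta)$ factors, the vanishing of the remainder via $g\le M$ and the superexponential growth of $\Gamma(n\beta)$, and the term-by-term integration with $n\beta\,\Gamma(n\beta)=\Gamma(n\beta+1)$ to recognize the Mittag--Leffler function all coincide with the cited proof.
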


\subsection{Examples}

Let us now discuss some functions $h:\mathbb{R}\rightarrow\left(0,1\right)$
which produce some interesting self-exciting processes. 
\begin{example}
\label{exa:1}Let $h\left(x\right)=\frac{1}{2}+\frac{1/2}{1+x^{2}}\in\left(\frac{1}{2},1\right)\subset\left(0,1\right),$
and $\left\{ B_{t}\right\} _{t\in[0,T]}$ be a one-dimensional Brownian
motion. Assume $X_{t}^{h}$ starts at zero and define the process
as given in equation $\left(\ref{eq:SEM}\right)$. Figure $\left(\ref{plot: SEM_ProcessEX1}\right)$
shows the plot of $h$ on the left hand side and a sample path of
the process, on the right hand side, resulting from the implementation
\footnote{All simulations were run with a step-size $\Delta t=1/100$.}
of the EM-approximation given by equation $\left(\ref{eq:EM_1}\right)$.
Notice the fact that this process is smoother than a Brownian motion
at the origin and rapidly converges to the classical Brownian motion
as as the process departs from zero. This implies that $h\rightarrow\frac{1}{2}$
having only $h=1$ any time the sample path crossed the $x$-axis
again.

\begin{figure}
\centering{}\includegraphics[width=0.49\textwidth]{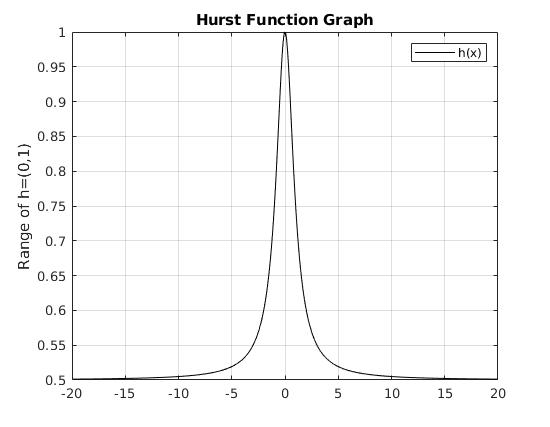}\includegraphics[width=0.49\textwidth]{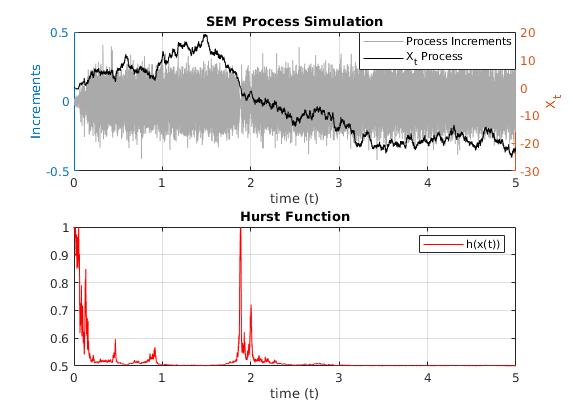}\caption[Simulation of a SEM Process I.]{Numerical simulation of a trajectory of a SEM Process given the Hurst
function is $h\left(x\right)=\frac{1}{2}+\frac{1/2}{1+x^{2}}$.}
\label{plot: SEM_ProcessEX1}
\end{figure}
\label{exa:2}Let $h\left(x\right)=\frac{1}{2}-\frac{1/2}{1+x^{2}}\in\left(0,\frac{1}{2}\right)\subset\left(0,1\right),$
and $\left\{ B_{t}\right\} _{t\in[0,T]}$ be a one-dimensional Brownian
motion. Assume $X_{t}^{h}$ starts at zero and define the process
as given in equation $\left(\ref{eq:SEM}\right)$. Figure $\left(\ref{plot: SEM_ProcessEX2}\right)$
shows the plot of $h$ on the left hand side and a sample path of
the process, on the right hand side, resulting from the implementation
of the EM-approximation given by equation $\left(\ref{eq:EM_1}\right)$.
Is interesting noticing in this case, contrary to the previous example,
that we have a rougher process than a Brownian motion at the origin,
temporarily resembles the classical Brownian motion as the sample
path departs from zero and gets rougher again whenever the process
crosses the $x$-axis. This makes the process go away from zero even
faster due to the increased roughness.

\begin{figure}
\centering{}\includegraphics[width=0.49\textwidth]{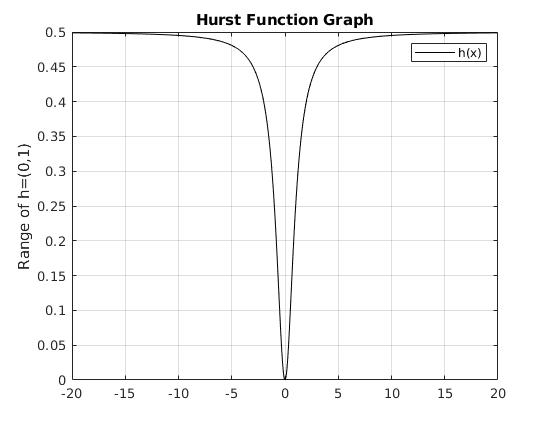}\includegraphics[width=0.49\textwidth]{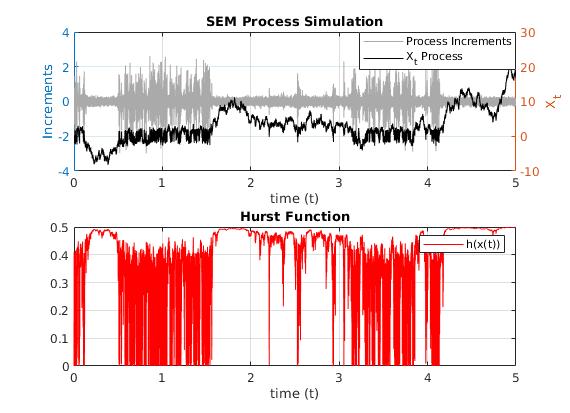}\caption[Simulation of a SEM Process II.]{Numerical simulation of a trajectory of a SEM Process given the Hurst
function is $h\left(x\right)=\frac{1}{2}-\frac{1/2}{1+x^{2}}$.}
\label{plot: SEM_ProcessEX2}
\end{figure}
\label{exa:3}Let $h\left(x\right)=\frac{1}{1+x^{2}}\in\left(0,1\right),$
and $\left\{ B_{t}\right\} _{t\in[0,T]}$ be a one-dimensional Brownian
motion. Assume $X_{t}^{h}$ starts at zero and define the process
as given in equation $\left(\ref{eq:SEM}\right)$. Figure $\left(\ref{plot: SEM_ProcesEX3}\right)$
shows the plot of $h$ on the left hand side and a sample path of
the process, on the right hand side, resulting from the implementation
\footnote{All simulations were run with a step-size $\Delta t=1/100$.}
of the EM-approximation given by equation $\left(\ref{eq:EM_1}\right)$.
Notice the fact that the Hurst function collapses to zero as the process
departs from zero, making the process be the roughest possible. Therefore
we would only recover smoother values, in particular $h=1$ only the
time the sample path crossed the $x$-axis again.

\begin{figure}
\begin{centering}
\includegraphics[width=0.49\textwidth]{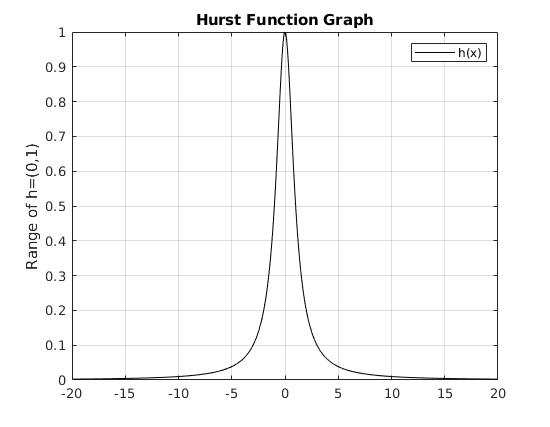}\includegraphics[width=0.49\textwidth]{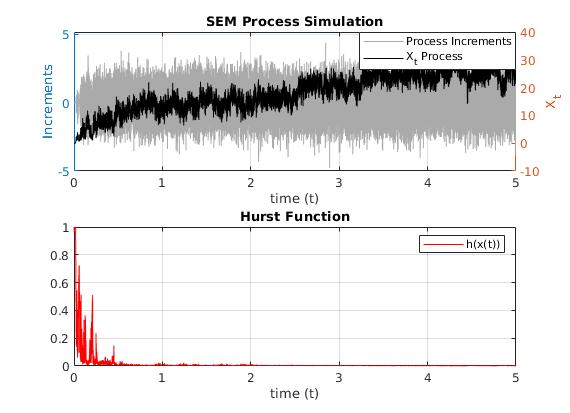}
\par\end{centering}
\caption[Simulation of a SEM Process III.]{Numerical simulation of a trajectory of a SEM Process given the Hurst
function is $h\left(x\right)=\frac{1}{1+x^{2}}$.}

\label{plot: SEM_ProcesEX3}
\end{figure}
\end{example}

\section{Self-Exciting Multifractional Gamma Processes}

Barndorff-Nielsen in \cite{B-N16}, introduces a class of self-exciting
gamma type of process, in order to model turbulence, because it captures
the intermittency effect observed in turbulent data. We would also
like to extend our process in order to capture the previously mentioned
intermittency effect. One could believe that if we were to choose
a trigonometric function as a Hurst function, i.e. $h\left(t,x\right)=\alpha+\beta\cdot\sin\left(\gamma x\right),$
for some $\alpha,\beta,\gamma\in\mathbb{R}$ in the SEM process, then
we might observe a regime switch in the Hurst parameters. Since the
values of the process $X_{t}$ may get very large, the oscillations
may take place more and more frequently. By introducing a type of
gamma process (SEM-Gamma) we dampen the Volterra kernel in (\ref{eq:SEM})
by an exponential function and make the process oscillate around a
mean value obtaining a more stable intermittency effect in the Hurst
function.
\begin{defn}
We say that a function $f:\left[0,T\right]\times\mathbb{R}\rightarrow\mathbb{R}_{+}$
is a dampening function if it is nonnegative, satisfies the following
Lipschitz conditions for all $x,y\in\mathbb{R}$ and $t,t^{\prime}\in\left[0,T\right]$
\[
\left|f\left(t,x\right)-f\left(t,y\right)\right|\leq C\left|x-y\right|,
\]
\[
\left|f\left(t,x\right)-f\left(t^{\prime},x\right)\right|\leq C\left|t-t^{\prime}\right|,
\]
and satisfies the following linear growth condition for all $x\in\mathbb{R}$
and $t\in\left[0,T\right]$
\[
\left|f\left(t,x\right)\right|\leq C\left(1+\left|x\right|\right),
\]
 for some constant $C>0$. 
\end{defn}

Let $f$ be a dampening function and let $h$ be a Hurst function
with parameters $\left(h_{*},h^{*}\right)$. The self-excited multifractional
gamma process is given formally by the Volterra equation 
\begin{equation}
X_{t}^{h,f}=\int_{0}^{t}\exp\left(-f\left(t,X_{s}^{h,f}\right)\left(t-s\right)\right)\left(t-s\right)^{h(t,X_{s}^{h,f})-\frac{1}{2}}dB_{s}.\label{eq: SEMP_OU-Process}
\end{equation}

The following lemma shows the existence and uniqueness of the above
equation by means of Theorem \ref{thm:Zhang Existence thm}.
\begin{lem}
\label{lem:SEM-OU_Well_defined}Let $\sigma(t,s,x)=\exp\left(-f\left(t,x\right)\left(t-s\right)\right)\left(t-s\right)^{h(t,x)-\frac{1}{2}}$,
such that $f$ is a dampening function and $h$ is a Hurst function
with parameters $\left(h_{*},h^{*}\right)$. Then, we have that 
\begin{equation}
\left|\sigma\left(t,s,x\right)\right|^{2}\leq k\left(t,s\right)\left(1+\left|x\right|^{2}\right),\label{eq:sigma linear growth-gamma}
\end{equation}
 where 
\[
k\left(t,s\right)=C_{T}\left(t-s\right)^{2h_{*}-1},
\]
 and 
\begin{equation}
\left|\sigma\left(t,s,x\right)-\sigma\left(t,s,y\right)\right|^{2}\leq C_{T}k\left(t,s\right)\left|\log\left(t-s\right)\right|^{2}\left|x-y\right|^{2}.\label{eq:Lipschitz sigma-gamma}
\end{equation}
Moreover, $\sigma$ satisfies $\mathbf{H1}$-$\mathbf{H2}$. \\
\end{lem}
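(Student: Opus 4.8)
The plan is to follow the proof of Lemma~\ref{lem:Well defined sigma} almost verbatim, treating the dampening factor $\exp\left(-f(t,x)(t-s)\right)$ as a harmless multiplicative perturbation of the kernel $(t-s)^{h(t,x)-\frac{1}{2}}$. The single observation that drives everything is that, since $f$ is nonnegative and $(t-s)\ge0$ on $\triangle^{(2)}\left([0,T]\right)$, the exponent is nonpositive, whence
\[
0\le\exp\left(-f(t,x)(t-s)\right)\le1,\qquad(t,s)\in\triangle^{(2)}\left([0,T]\right),\ x\in\mathbb{R}.
\]
Thus the dampening factor can only improve the bounds already established for the bare kernel, and the linear-growth inequality \eqref{eq:sigma linear growth-gamma} is immediate: I would write
\[
\left|\sigma(t,s,x)\right|^{2}=\exp\left(-2f(t,x)(t-s)\right)(t-s)^{2h(t,x)-1}\le(t-s)^{2h(t,x)-1},
\]
and then invoke estimate \eqref{eq:BoundSigma2_No_x} of Lemma~\ref{lem:Well defined sigma} to obtain $\left|\sigma(t,s,x)\right|^{2}\le C_{T}(t-s)^{2h_{*}-1}=k(t,s)\le k(t,s)\left(1+\left|x\right|^{2}\right)$.

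For the Lipschitz bound \eqref{eq:Lipschitz sigma-gamma} I would factor $\sigma=g\cdot p$ with $g(t,s,x):=\exp\left(-f(t,x)(t-s)\right)$ and $p(t,s,x):=(t-s)^{h(t,x)-\frac{1}{2}}$, and use the telescoping identity
\[
\sigma(t,s,x)-\sigma(t,s,y)=g(t,s,x)\bigl(p(t,s,x)-p(t,s,y)\bigr)+p(t,s,y)\bigl(g(t,s,x)-g(t,s,y)\bigr).
\]
The first term is handled by $|g|\le1$ together with estimate \eqref{eq:Lipschitz sigma}, already proved for $p$, giving a contribution $\le C_{T}k(t,s)\left|\log(t-s)\right|^{2}\left|x-y\right|^{2}$. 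For the second term I would use $\left|p(t,s,y)\right|^{2}\le k(t,s)$ and estimate $\left|g(t,s,x)-g(t,s,y)\right|$ via inequality \eqref{eq: expinequality}: both exponents are nonpositive, so the prefactor $e^{\max(\cdot,\cdot)}$ is at most $1$, and the Lipschitz property of $f$ in its second argument yields $\left|g(t,s,x)-g(t,s,y)\right|\le(t-s)\,C\left|x-y\right|\le C_{T}\left|x-y\right|$.

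This produces a second contribution of the form $C_{T}k(t,s)(t-s)^{2}\left|x-y\right|^{2}=C_{T}(t-s)^{2h_{*}+1}\left|x-y\right|^{2}$. Since it carries a strictly larger power of $(t-s)$ than $k(t,s)$ does, it is in fact less singular at $t-s=0$, and I would dispose of it exactly as Lemma~\ref{lem:Well defined sigma} disposes of its analogous term: split the estimate into $t-s<1$ and $t-s\ge1$. In the singular regime $t-s<1$ the logarithmic term dominates and one recovers \eqref{eq:Lipschitz sigma-gamma}, while for $t-s\ge1$ the function $\sigma$ is bounded and globally Lipschitz in $x$, giving $\left|\sigma(t,s,x)-\sigma(t,s,y)\right|^{2}\le C_{T}\left|x-y\right|^{2}$. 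Finally, $\mathbf{H1}$-$\mathbf{H2}$ follow immediately, because the growth and Lipschitz kernels are the same $k(t,s)=C_{T}(t-s)^{2h_{*}-1}$ and $C_{T}k(t,s)\left|\log(t-s)\right|^{2}$ whose membership in $\mathcal{K}_{0}$ was already verified through the function $\phi_{\nu}$ (for $\nu=0$ and $\nu=1$) in the proof of Lemma~\ref{lem:Well defined sigma}. The only, and rather minor, obstacle is the extra $(t-s)$-factor generated when differentiating the dampening term: one must check that it never aggravates the singularity — which it does not, since it enters with a positive power — and that the sign condition $f\ge0$ keeps the exponential prefactor bounded by $1$ throughout, so that no new singular behavior is introduced.
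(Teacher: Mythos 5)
Your proposal is in substance the paper's own proof: both rest on the single observation $0\leq\exp\left(-f\left(t,x\right)\left(t-s\right)\right)\leq1$, which makes \eqref{eq:sigma linear growth-gamma} immediate from \eqref{eq:BoundSigma2_No_x}, and both reduce the Lipschitz estimate to Lemma \ref{lem:Well defined sigma}. The only difference is bookkeeping: you telescope the product $\sigma=g\cdot p$ and treat the two factors separately, while the paper writes $\sigma\left(t,s,x\right)=\exp\left(-f\left(t,x\right)\left(t-s\right)+\log\left(t-s\right)\left(h\left(t,x\right)-\frac{1}{2}\right)\right)$ and applies \eqref{eq: expinequality} once to the combined exponent; either way one ends up with the same two contributions, an $h$-Lipschitz term carrying a factor $\left|\log\left(t-s\right)\right|$ and an $f$-Lipschitz term carrying a factor $\left(t-s\right)$.

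One step you assert is not right as stated, though: for $t-s<1$ it is not true that ``the logarithmic term dominates''. As $t-s\uparrow1$ one has $\left(t-s\right)^{2}\rightarrow1$ while $\left|\log\left(t-s\right)\right|^{2}\rightarrow0$, so $\left(t-s\right)^{2}\leq C\left|\log\left(t-s\right)\right|^{2}$ fails near $t-s=1$ (it does hold on, say, $t-s\leq1/2$). In fact, when $T>1$ and $f$ genuinely depends on $x$, the literal inequality \eqref{eq:Lipschitz sigma-gamma} cannot hold for $t-s$ near $1$: at $t-s=1$ its right-hand side vanishes while the left-hand side equals $\left|e^{-f\left(t,x\right)}-e^{-f\left(t,y\right)}\right|^{2}>0$. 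You should know that the paper's proof silently makes the very same absorption, bounding $\left|f\left(t,x\right)-f\left(t,y\right)\right|\left|t-s\right|+\left|\log\left(t-s\right)\right|\left|h\left(t,x\right)-h\left(t,y\right)\right|$ by $C\left|\log\left(t-s\right)\right|\left|x-y\right|$ without comment, so you have reproduced --- and in fact made more visible --- a defect of the original. The clean fix is simply to keep your extra term: the honest Lipschitz kernel is $C_{T}k\left(t,s\right)\left(\left|\log\left(t-s\right)\right|^{2}+\left(t-s\right)^{2}\right)\left|x-y\right|^{2}$, and since $k\left(t,s\right)\left(t-s\right)^{2}=C_{T}\left(t-s\right)^{2h_{*}+1}$ is less singular than $k$ and plainly lies in $\mathcal{K}_{0}$, the operative conclusion $\mathbf{H1}$-$\mathbf{H2}$ --- which is all that the subsequent results use --- goes through exactly as in your argument.
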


Proofs for all the results in this section are reported in the appendix,
since they are analogous to the ones provided previously for the SEM
process. 

Now since the new $\sigma$ proposed for the SEM-Gamma process also
satisfies $\mathbf{H1}$-$\mathbf{H2}$ we can apply again Zhang's
theorem, in the same way we did in Theorem \ref{thm:Existence and unique},
yielding the existence and uniqueness and bounds on $p$-moments for
the solution of 
\begin{equation}
X_{t}^{h,f}=g\left(t\right)+\int_{0}^{t}e^{-f\left(t,X_{t}^{h,f}\right)\left(t-s\right)}\left(t-s\right)^{h\left(t,X_{t}^{h,f}\right)-\frac{1}{2}}dB_{s}.\label{eq: SEM-Gammaequation}
\end{equation}

We call this process a \textit{Self-Exciting Multifractional Gamma
process (SEM-Gamma)}.

The following Lemma is key to study the Hölder regularity for the
solution of $\left(\ref{eq: SEM-Gammaequation}\right)$, which coincides
and can be derived in the same way as for the SEM process. It is also
useful for the discussion of the strong convergence of the approximating
scheme given in Theorem $\left(\ref{thm:EM_StrongConvergenceGamma}\right).$
\begin{lem}
\label{lem:LambdaFunctionGamma} Let $\sigma(t,s,x)=\exp\left(-f\left(t,x\right)\left(t-s\right)\right)\left(t-s\right)^{h(t,x)-\frac{1}{2}}$,
such that $f$ is a dampening function and $h$ is a Hurst function
with parameters $\left(h_{*},h^{*}\right)$. Then, for any $0<\gamma<2h_{*}$
there exists $\lambda_{\gamma}:\triangle^{(3)}\left(\left[0,T\right]\right)\rightarrow\mathbb{R}$
such that
\begin{equation}
\left|\sigma\left(t,s,x\right)-\sigma\left(t^{\prime},s,x\right)\right|^{2}\leq\lambda_{\gamma}\left(t,t^{\prime},s\right)\left(1+\left|x\right|^{2}\right),\label{eq:time reg sigma-gamma}
\end{equation}
and
\begin{equation}
\int_{0}^{t^{\prime}}\lambda_{\gamma}\left(t,t^{\prime},s\right)ds\leq C_{T,\gamma}\left|t-t^{\prime}\right|^{\gamma},\label{eq:TimeLispchitzIntegralLambda-gamma}
\end{equation}
for some constant $C_{T,\gamma}>0.$
\end{lem}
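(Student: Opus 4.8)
The plan is to mirror the proof of Lemma \ref{lem:LambdaFunction}, isolating the exponential dampening factor. Writing $\psi(t,s,x):=(t-s)^{h(t,x)-\frac12}$ for the undampened SEM kernel, so that $\sigma(t,s,x)=e^{-f(t,x)(t-s)}\psi(t,s,x)$, I would add and subtract $e^{-f(t',x)(t'-s)}\psi(t,s,x)$ to split
\[
\sigma(t,s,x)-\sigma(t',s,x)=\underbrace{\left(e^{-f(t,x)(t-s)}-e^{-f(t',x)(t'-s)}\right)\psi(t,s,x)}_{=:A}+\underbrace{e^{-f(t',x)(t'-s)}\left(\psi(t,s,x)-\psi(t',s,x)\right)}_{=:B}.
\]
The term $B$ is essentially free: since $f\geq0$ and $t'-s\geq0$ we have $e^{-f(t',x)(t'-s)}\leq1$, and because $\psi$ is exactly the kernel treated in Lemma \ref{lem:LambdaFunction}, that lemma supplies $|B|^2\leq\lambda_\gamma^{\mathrm{SEM}}(t,t',s)=C_{T,\gamma}(t-t')^\gamma(t'-s)^{-1+h_*-\frac{\gamma}{2}}$, whose integral over $[0,t']$ is already known to be bounded by $C_{T,\gamma}|t-t'|^\gamma$.

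The real work is in the term $A$, which carries the genuinely new $x$-dependence. I would bound the exponential difference via inequality $\left(\ref{eq: expinequality}\right)$: both exponents $-f(t,x)(t-s)$ and $-f(t',x)(t'-s)$ are nonpositive, so the prefactor $e^{\max(\cdot)}\leq1$ and
\[
\left|e^{-f(t,x)(t-s)}-e^{-f(t',x)(t'-s)}\right|\leq\left|f(t,x)(t-s)-f(t',x)(t'-s)\right|.
\]
Splitting $f(t,x)(t-s)-f(t',x)(t'-s)=\left(f(t,x)-f(t',x)\right)(t-s)+f(t',x)(t-t')$ and invoking the Lipschitz-in-time bound $|f(t,x)-f(t',x)|\leq C|t-t'|$ together with the linear growth $|f(t',x)|\leq C(1+|x|)$, and using $t-s\leq T$, yields $|e^{-f(t,x)(t-s)}-e^{-f(t',x)(t'-s)}|\leq C_T(1+|x|)|t-t'|$. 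Combined with $|\psi(t,s,x)|^2=(t-s)^{2h(t,x)-1}\leq C_T(t-s)^{2h_*-1}$, exactly as in $\left(\ref{eq:BoundSigma2_No_x}\right)$, this gives $|A|^2\leq C_T(1+|x|)^2|t-t'|^2(t-s)^{2h_*-1}$.

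Putting the pieces together through $\left(\ref{eq:simpleQineq}\right)$ and $(1+|x|)^2\leq2(1+|x|^2)$, I would set
\[
\lambda_\gamma(t,t',s):=C_T|t-t'|^2(t-s)^{2h_*-1}+C_{T,\gamma}(t-t')^\gamma(t'-s)^{-1+h_*-\frac{\gamma}{2}},
\]
which gives $\left(\ref{eq:time reg sigma-gamma}\right)$. For the integral $\left(\ref{eq:TimeLispchitzIntegralLambda-gamma}\right)$ the second summand is handled by Lemma \ref{lem:LambdaFunction}, while for the first I would compute $\int_0^{t'}(t-s)^{2h_*-1}\,ds=\int_{t-t'}^{t}u^{2h_*-1}\,du\leq\frac{T^{2h_*}}{2h_*}$, finite because $2h_*-1>-1$; hence the first summand integrates to $C_{T,h_*}|t-t'|^2$. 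The only (minor) obstacle is reconciling this $|t-t'|^2$ with the target rate $|t-t'|^\gamma$: since $\gamma<2h_*\leq2h^*<2$ and $|t-t'|\leq T$, I downgrade $|t-t'|^2=|t-t'|^{2-\gamma}|t-t'|^\gamma\leq T^{2-\gamma}|t-t'|^\gamma$, completing the estimate. The conceptual point is that the dampening forces the $(1+|x|^2)$ factor on the right-hand side—absent in Lemma \ref{lem:LambdaFunction}—precisely because $f$ is only of linear growth; everything else reduces to the already-established SEM bounds.
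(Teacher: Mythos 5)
Your proof is correct, and it is a mildly different — and somewhat cleaner — route than the paper's. The paper decomposes $\sigma(t,s,x)-\sigma(t',s,x)$ in two stages: it first adds and subtracts $e^{-f(t,x)(t-s)}(t-s)^{h(t',x)-\frac{1}{2}}$, and then, inside the residual term, further adds and subtracts $e^{-f(t,x)(t-s)}(t'-s)^{h(t',x)-\frac{1}{2}}$; this produces pieces $\tilde{J}^{1},\tilde{J}^{2}$ that are controlled by reusing the \emph{internal} bounds on $J^{1}$ and $J^{2}$ from the proof of Lemma \ref{lem:LambdaFunction}, plus one new term $C_{T}(t'-s)^{h_{*}-\frac{1}{2}}|t-t'|(1+|x|)$ coming from the exponential difference. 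You instead factor out the exponential in a single step, so that the whole undampened difference $\psi(t,s,x)-\psi(t',s,x)$ is handled by Lemma \ref{lem:LambdaFunction} used as a black box (its statement rather than its proof), and only the exponential difference needs new work — which you treat exactly as the paper does: inequality (\ref{eq: expinequality}), nonpositivity of both exponents so the prefactor is at most $1$, and the Lipschitz-in-time plus linear-growth hypotheses on $f$, yielding the same $C_{T}(1+|x|)|t-t'|$ bound and explaining why the factor $(1+|x|^{2})$ must now appear. The final bookkeeping also differs: the paper absorbs its new term into the single canonical kernel $\lambda_{\gamma}(t,t',s)=C_{T,\gamma}(t-t')^{\gamma}(t'-s)^{-1+h_{*}-\frac{\gamma}{2}}$, identical in form to the SEM case, whereas you keep a two-term $\lambda_{\gamma}$ and integrate each summand separately, downgrading $|t-t'|^{2}\leq T^{2-\gamma}|t-t'|^{\gamma}$ at the end; both are valid for the lemma as stated. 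One small advantage of the paper's canonical form is downstream: the Euler--Maruyama estimates for the SEM-Gamma process use the monotonicity $\lambda_{\gamma}(t,t',\eta(s))\leq\lambda_{\gamma}(t,t',s)$, which your first summand $C_{T}|t-t'|^{2}(t-s)^{2h_{*}-1}$ would violate when $h_{*}>\frac{1}{2}$; this is harmless, however, since on $\triangle^{(3)}$ one has $(t-s)^{2h_{*}-1}\leq C_{T}(t'-s)^{h_{*}-1-\frac{\gamma}{2}}$, so your $\lambda_{\gamma}$ can be absorbed into the canonical form with no loss.
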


In order to simulate this process we will use, again, the Euler-Maruyama
approximation to discretize the continuous equation $\left(\ref{eq: SEMP_OU-Process}\right)$.
Consider a time discretization of the interval $\left[0,T\right],$
using a step-size $\Delta t=\frac{T}{N}>0$. The EM method yields
a discrete time approximation $\bar{X}_{k}^{h,f}$ of the process
$X_{t_{k}}^{h,f}$ for $t_{k}=k\Delta t$ with $k\in\left\{ 0,\ldots,N\right\} .$
Therefore we have the following discrete time equation
\begin{align}
\bar{X}_{0}^{h,f} & =X_{0}^{h,f}=0\\
\bar{X}_{k}^{h,f} & =\sum_{i=0}^{k-1}\exp\left(-f\left(t_{k},\bar{X}_{i}^{h,f}\right)\left(t_{k}-t_{i}\right)\right)\left(t_{k}-t_{i}\right)^{h\left(t_{k},\bar{X}_{i}^{h,f}\right)-\frac{1}{2}}\Delta B_{i}\quad\forall k\in\left\{ 1,\ldots,N\right\} ,\label{eq:SEMP-OU-discretization}
\end{align}

where $\Delta B_{i}=B\left(t_{i+1}\right)-B\left(t_{i}\right)$. 

Before trying to implement this approximation, in order to study the
process numerically we will have to prove the following theorem to
ensure the approximation is strongly converging to the process itself.
It will be convenient, just as we did with the SEM process, to consider
a continuous time interpolation of $\left\{ \bar{X}_{k}^{h,f}\right\} _{k\in\left\{ 0,\ldots,N\right\} }$
given by
\begin{equation}
\bar{X}_{t}^{h,f}=\int_{0}^{t}\exp\left(-f\left(t,\bar{X}_{\eta\left(s\right)}^{h,f}\right)\left(t-\eta\left(s\right)\right)\right)\left(t-\eta\left(s\right)\right)^{h\left(t,\bar{X}_{\eta\left(s\right)}^{h,f}\right)-\frac{1}{2}}dB_{s},\quad\forall t\in\left[0,T\right],\label{eq:DefContEulerGamma}
\end{equation}
where, again, $\eta\left(s\right):=t_{i}\cdot\boldsymbol{1}_{\left[t_{i},t_{i+1}\right)}\left(s\right)$.
We also have the following technical result.
\begin{lem}
\label{lem:Bound=000026TimeLipschContEulerGamma} Let $f$ be a dampening
function, $h$ be a Hurst function with parameters $\left(h_{*},h^{*}\right)$
and $\bar{X}^{h,f}=\left\{ \bar{X}_{t}^{h,f}\right\} _{t\in\left[0,T\right]}$
be given by $\left(\ref{eq:DefContEulerGamma}\right)$. Then
\begin{equation}
\mathbb{E}\left[\left|\bar{X}_{t}^{h,f}\right|^{2}\right]\leq C_{T},\quad0\leq t\leq T,\label{eq:SecondMomentContEuler-gamma}
\end{equation}
and 
\begin{equation}
\mathbb{E}\left[\left|\bar{X}_{t}^{h,f}-\bar{X}_{t^{\prime}}^{h,f}\right|^{2}\right]\leq C_{T,\gamma}\left|t-t^{\prime}\right|^{\gamma},\quad0\leq t^{\prime}\leq t\leq T,\label{eq:SecondMomentTLCE-gamma}
\end{equation}
 for any $\gamma<2h_{*}$, where $C_{T}$ and $C_{T,\gamma}$ are
positive constants.
\end{lem}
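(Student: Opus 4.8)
The plan is to mirror the proof of Lemma \ref{lem:Bound=000026TimeLipschContEuler} for the ordinary SEM process, since the continuous interpolation (\ref{eq:DefContEulerGamma}) has exactly the same form, only with the modified kernel $\sigma\left(t,s,x\right)=\exp\left(-f\left(t,x\right)\left(t-s\right)\right)\left(t-s\right)^{h\left(t,x\right)-\frac{1}{2}}$. The key structural observation is that, because $f$ is nonnegative and $t-s\geq0$ on $\triangle^{(2)}\left(\left[0,T\right]\right)$, the dampening factor satisfies $\exp\left(-f\left(t,x\right)\left(t-s\right)\right)\leq1$; hence the linear growth estimate (\ref{eq:sigma linear growth-gamma}) can in fact be sharpened to the \emph{uniform-in-$x$} bound $\left|\sigma\left(t,s,x\right)\right|^{2}\leq k\left(t,s\right)$ with $k\left(t,s\right)=C_{T}\left(t-s\right)^{2h_{*}-1}$. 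This is exactly the bound that made the SEM argument work without invoking a Gronwall step, so the first estimate (\ref{eq:SecondMomentContEuler-gamma}) goes through verbatim: applying the It\^o isometry to (\ref{eq:DefContEulerGamma}), using $\left|\sigma\left(t,\eta\left(s\right),\bar{X}_{\eta\left(s\right)}^{h,f}\right)\right|^{2}\leq k\left(t,\eta\left(s\right)\right)$, then the monotonicity $k\left(t,\eta\left(s\right)\right)\leq k\left(t,s\right)$ (valid since $\eta\left(s\right)\leq s$ and $k$ is a decreasing power of $t-\cdot$), and finally $\int_{0}^{t}k\left(t,s\right)ds\leq C_{T}$, yields $\mathbb{E}\left[\left|\bar{X}_{t}^{h,f}\right|^{2}\right]\leq C_{T}$ independently of the mesh.

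For the time-regularity bound (\ref{eq:SecondMomentTLCE-gamma}) I would split the increment exactly as in the SEM case, writing $\bar{X}_{t}^{h,f}-\bar{X}_{t'}^{h,f}=J_{1}+J_{2}$ with $J_{1}=\int_{t'}^{t}\sigma\left(t,\eta\left(s\right),\bar{X}_{\eta\left(s\right)}^{h,f}\right)dB_{s}$ and $J_{2}=\int_{0}^{t'}\left[\sigma\left(t,\eta\left(s\right),\bar{X}_{\eta\left(s\right)}^{h,f}\right)-\sigma\left(t',\eta\left(s\right),\bar{X}_{\eta\left(s\right)}^{h,f}\right)\right]dB_{s}$. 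The term $J_{1}$ is treated precisely as before: the It\^o isometry together with the uniform kernel bound and the monotonicity give $\mathbb{E}\left[\left|J_{1}\right|^{2}\right]\leq\int_{t'}^{t}k\left(t,s\right)ds=C_{T}\left|t-t'\right|^{2h_{*}}$, with no moment of $\bar{X}^{h,f}$ needed.

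The main obstacle, and the only genuine difference from the SEM proof, appears in $J_{2}$. Here the time-regularity estimate for the Gamma kernel, equation (\ref{eq:time reg sigma-gamma}) of Lemma \ref{lem:LambdaFunctionGamma}, carries an extra factor $\left(1+\left|x\right|^{2}\right)$ (arising because differentiating the dampening factor in time produces $f\left(t,x\right)$, which grows linearly in $x$), whereas the corresponding SEM estimate (\ref{eq:time reg sigma}) does not. Using the It\^o isometry and (\ref{eq:time reg sigma-gamma}) I would bound
\[
\mathbb{E}\left[\left|J_{2}\right|^{2}\right]\leq\mathbb{E}\left[\int_{0}^{t'}\lambda_{\gamma}\left(t,t',\eta\left(s\right)\right)\left(1+\left|\bar{X}_{\eta\left(s\right)}^{h,f}\right|^{2}\right)ds\right],
\]
and then, by Tonelli (all integrands nonnegative) together with the uniform second-moment bound from the first part, replace $1+\mathbb{E}\left[\left|\bar{X}_{\eta\left(s\right)}^{h,f}\right|^{2}\right]$ by $1+C_{T}$. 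Invoking the monotonicity $\lambda_{\gamma}\left(t,t',\eta\left(s\right)\right)\leq\lambda_{\gamma}\left(t,t',s\right)$ (again because $\lambda_{\gamma}$ is a negative power of $t'-\cdot$ and $\eta\left(s\right)\leq s$) and the integral bound (\ref{eq:TimeLispchitzIntegralLambda-gamma}) gives $\mathbb{E}\left[\left|J_{2}\right|^{2}\right]\leq C_{T,\gamma}\left|t-t'\right|^{\gamma}$. Combining via $\left|J_{1}+J_{2}\right|^{2}\leq2\left(\left|J_{1}\right|^{2}+\left|J_{2}\right|^{2}\right)$ from (\ref{eq:simpleQineq}) and absorbing the $\left|t-t'\right|^{2h_{*}}$ term into $\left|t-t'\right|^{\gamma}$ (legitimate since $\gamma<2h_{*}$ and $\left|t-t'\right|\leq T$) yields the claim. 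This ordering is essential: the uniform moment bound must be proved first precisely because it is what tames the $\left(1+\left|x\right|^{2}\right)$ growth in the Gamma kernel's time-regularity estimate.
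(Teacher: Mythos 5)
Your proposal is correct and follows essentially the same route as the paper's own proof: the same uniform kernel bound via $e^{-f(t,x)(t-s)}\leq 1$, the same $J_{1}+J_{2}$ splitting with the monotonicity $k\left(t,\eta\left(s\right)\right)\leq k\left(t,s\right)$ and $\lambda_{\gamma}\left(t,t^{\prime},\eta\left(s\right)\right)\leq\lambda_{\gamma}\left(t,t^{\prime},s\right)$, and the same use of the uniform second-moment bound to absorb the $\left(1+\left|x\right|^{2}\right)$ factor in $\left(\ref{eq:time reg sigma-gamma}\right)$. You even make explicit the one point the paper only remarks in passing, namely that the moment bound must be established first precisely to tame this extra growth factor.
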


Using Lemma \ref{lem:Bound=000026TimeLipschContEulerGamma} and Theorem
\ref{theo:VolterraGronwall} we can show the order of convergence
for the approximating scheme.
\begin{thm}
\label{thm:EM_StrongConvergenceGamma}Let $f$ be a dampening function,
$h$ be a Hurst function with parameters $\left(h_{*},h^{*}\right)$
and $\bar{X}^{h,f}=\left\{ \bar{X}_{t}^{h,f}\right\} _{t\in\left[0,T\right]}$
be given by $\left(\ref{eq:DefContEulerGamma}\right)$. Then the Euler-Maruyama
scheme $\left(\ref{eq:DefContEulerGamma}\right)$, satisfies
\begin{equation}
\sup_{0\leq t\leq T}\mathbb{E}\left[\left|X_{t}^{h,f}-\bar{X}_{t}^{h,f}\right|^{2}\right]\leq C_{T,\gamma,h_{*}}E_{h_{*}}\left(C_{T,\gamma,h_{*}}\Gamma\left(h_{*}\right)T^{h_{*}}\right)\left|\Delta t\right|^{\gamma},\label{eq:SEMGamma_StrongConv}
\end{equation}
where $\gamma\in\left(0,2h_{*}\right)$ and $C_{T,\gamma,h_{*}}$
is a positive constant, which does not depend on $N$.
\end{thm}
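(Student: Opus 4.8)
The plan is to follow, almost verbatim, the argument establishing Theorem~\ref{thm:EM_StrongConvergence} for the SEM process, since the only structural novelty here is the dampening factor $\exp(-f(t,x)(t-s))$, whose effect has already been absorbed into Lemmas~\ref{lem:SEM-OU_Well_defined}, \ref{lem:LambdaFunctionGamma} and \ref{lem:Bound=000026TimeLipschContEulerGamma}. First I would set $\delta_t := X_t^{h,f} - \bar{X}_t^{h,f}$ and $\varphi(t) := \sup_{0\le s\le t}\mathbb{E}[|\delta_s|^2]$, and subtract the equation (\ref{eq: SEMP_OU-Process}) from the continuous interpolation (\ref{eq:DefContEulerGamma}). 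Inserting and removing the term with integrand $\sigma(t,s,\bar{X}_{\eta(s)}^{h,f})$, I would split $\delta_t = I_1(t) + I_2(t)$, where $I_1$ collects the difference coming from the state argument, $\sigma(t,s,X_s^{h,f}) - \sigma(t,s,\bar{X}_{\eta(s)}^{h,f})$, and $I_2$ collects the difference coming from the time argument, $\sigma(t,s,\bar{X}_{\eta(s)}^{h,f})$ evaluated at base $(t-s)$ versus $(t-\eta(s))$.

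For $I_1$ I would apply the It\^{o} isometry together with the Lipschitz bound (\ref{eq:Lipschitz sigma-gamma})---which has exactly the same form as in the SEM case and, crucially, carries \emph{no} extra growth factor---to obtain $\mathbb{E}[|I_1(t)|^2] \le C_T \int_0^t (t-s)^{2h_*-1}|\log(t-s)|^2\,\mathbb{E}[|X_s^{h,f} - \bar{X}_{\eta(s)}^{h,f}|^2]\,ds$. Absorbing the logarithm into a power loss $\delta>0$ (so the kernel becomes $(t-s)^{2(h_*-\delta)-1}$), using $\mathbb{E}[|X_s^{h,f}-\bar{X}_{\eta(s)}^{h,f}|^2]\le 2\varphi(s)+2\mathbb{E}[|\bar{X}_s^{h,f}-\bar{X}_{\eta(s)}^{h,f}|^2]$, and controlling the last term by the Euler increment bound (\ref{eq:SecondMomentTLCE-gamma}), I would arrive (after choosing $\delta = h_*/2$) at $\mathbb{E}[|I_1(t)|^2]\le C_{T,h_*}\{\int_0^t (t-s)^{h_*-1}\varphi(s)\,ds + |\Delta t|^\gamma\}$, exactly as in the SEM proof.

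The genuinely new point is the treatment of $I_2$, and this is where I expect the only real obstacle. I would invoke the time-regularity estimate (\ref{eq:time reg sigma-gamma}) of Lemma~\ref{lem:LambdaFunctionGamma}, writing $\mathbb{E}[|I_2(t)|^2]\le \int_0^t \lambda_\gamma(t+(s-\eta(s)),t,s)\,\mathbb{E}[1+|\bar{X}_{\eta(s)}^{h,f}|^2]\,ds$, where the shift in the first argument records that the bases $(t-\eta(s))>(t-s)$ differ by $s-\eta(s)\le\Delta t$. In contrast to the SEM case, the bound (\ref{eq:time reg sigma-gamma}) is no longer deterministic: the dampening term forces the extra factor $(1+|x|^2)$. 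I would kill this factor using the uniform second-moment bound (\ref{eq:SecondMomentContEuler-gamma}) of Lemma~\ref{lem:Bound=000026TimeLipschContEulerGamma}, giving $\mathbb{E}[1+|\bar{X}_{\eta(s)}^{h,f}|^2]\le C_T$, after which (\ref{eq:TimeLispchitzIntegralLambda-gamma}) yields $\mathbb{E}[|I_2(t)|^2]\le C_{T,\gamma}|\Delta t|^\gamma$ for every $\gamma<2h_*$. The whole difficulty is thus reduced to correctly feeding the uniform moment control of the scheme into the stochastic time-regularity estimate; everything else is mechanical.

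Finally, combining the two bounds gives the fractional integral inequality $\varphi(t)\le C_{T,\gamma,h_*}\{\int_0^t (t-s)^{h_*-1}\varphi(s)\,ds + |\Delta t|^\gamma\}$. Applying the fractional Gronwall inequality of Theorem~\ref{theo:VolterraGronwall} with $\beta=h_*$, the constant (hence nondecreasing) forcing $a(t)=C_{T,\gamma,h_*}|\Delta t|^\gamma$ and $g(t)=C_{T,\gamma,h_*}$, then produces $\varphi(T)\le C_{T,\gamma,h_*}E_{h_*}(C_{T,\gamma,h_*}\Gamma(h_*)T^{h_*})|\Delta t|^\gamma$. Since $\varphi$ is nondecreasing, $\varphi(T)=\sup_{0\le t\le T}\mathbb{E}[|\delta_t|^2]$, which is precisely the claimed estimate (\ref{eq:SEMGamma_StrongConv}).
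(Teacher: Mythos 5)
Your proposal is correct and follows essentially the same route as the paper's appendix proof: the same quantities $\delta_t$ and $\varphi(t)$, the same split of the error into a state-difference integral and a time-difference integral, the same use of the Lipschitz bound $\left(\ref{eq:Lipschitz sigma-gamma}\right)$ with the logarithm absorbed via $\delta=h_*/2$, the same treatment of the time term through $\lambda_\gamma\left(t+\left(s-\eta\left(s\right)\right),t,s\right)$ with the extra factor $\left(1+\left|x\right|^{2}\right)$ neutralized by the uniform moment bound $\left(\ref{eq:SecondMomentContEuler-gamma}\right)$, and the same application of Theorem \ref{theo:VolterraGronwall} with $\beta=h_*$, constant $a$ and $g$. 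The only deviation is cosmetic (and in fact slightly cleaner): you insert the full $\sigma\left(t,s,\bar{X}_{\eta\left(s\right)}^{h,f}\right)$ as the intermediate term, so your $I_2$ has the same state argument in both pieces and matches the hypotheses of Lemma \ref{lem:LambdaFunctionGamma} exactly, whereas the paper's intermediate term keeps the exponential factor $e^{-f\left(t,X_{s}^{h,f}\right)\left(t-s\right)}$ and thus leaves a residual state-dependence inside its $\tilde{I}_{2}$.
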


\begin{example}
We will continue the previous example (\ref{exa:3}). In \cite{FiSo11},
D. Sornette and V. Filimonov suggested a class of self-excited processes
that may exhibit all stylized facts found in financial time series
as heavy tails (asset return distribution displays heavy tails with
positive excess kurtosis), absence of autocorrelations (autocorrelations
in asset returns are negligible, except for very short time scales
$\simeq20$ minutes), volatility clustering (absolute returns display
a positive, significant and slowly decaying autocorrelation function)
and the leverage effect (volatility measures of an asset are negatively
correlated with its returns) among others stated in \cite{CoTa04}.
As we will see, the SEM-Gamma process resembles this properties for
some choices of $h$. The SEM-Gamma process could also be interesting
for modeling commodity markets given that it mean reversion property,
clustering in its increments and also stationary increments, given
by the dampening through the exponential function. The right plot
in Figure $\left(\ref{plot: SEM-Gamma}\right)$, corresponds to a
simulation of a sample path of the process (\ref{eq:SEMP-OU-discretization}),
given the Hurst function $h$ is the same as in example (\ref{exa:3})
given by $h\left(x\right)=\frac{1}{1+x^{2}}$. Notice also we have
taken in this first example of the SEM-Gamma process $f\left(x\right)=0,$
which provides the regular SEM process of the previous section just
to show the left plot looks very similar to the left plot in Figure
$\left(\ref{plot: SEM_ProcesEX3}\right)$.

\begin{figure}
\centering{}\includegraphics[width=0.49\textwidth]{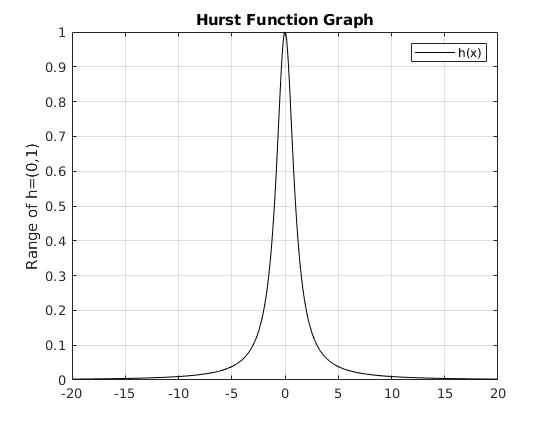}\includegraphics[bb=0bp 0bp 560bp 420bp,width=0.49\textwidth]{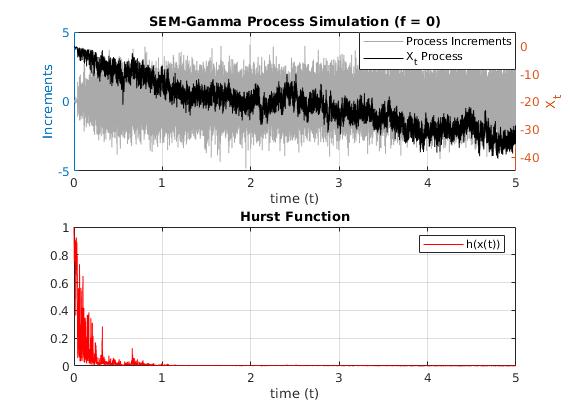}\caption[SEM-Gamma Process Simulation]{Numerical simulation of a trajectory of a SEM-Gamma Process given
the Hurst function is $f=0$ and $h(x)=\frac{1}{1+x^{2}}$.}
\label{plot: SEM-Gamma}
\end{figure}
\label{exa:4_comparative} Figure $\left(\ref{plot: SEM-Gamma_Comparative}\right)$
shows the change in the behavior of the Hurst exponent (a transition
from rougher values to smoother values, i.e. $h\approx0$ to $h\approx1$)
as we shift from lower values for speed of mean reversion, i.e. $f$,
to higher values. In particular we compare $f\in\left\{ 0,0.5,1,10\right\} $

\begin{figure}
\begin{centering}
\includegraphics[width=0.49\textwidth]{EX4_SEMGammaProcess_a=0}\includegraphics[width=0.49\textwidth]{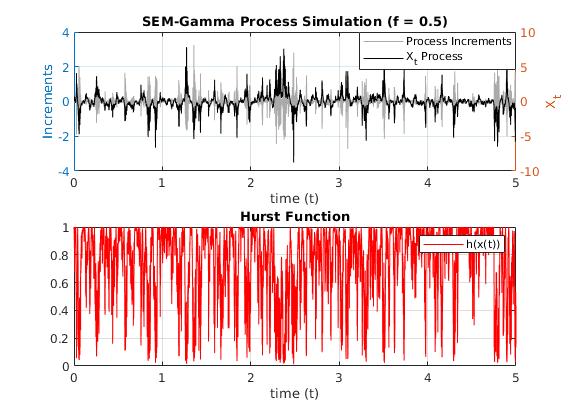}
\par\end{centering}
\begin{centering}
\includegraphics[width=0.49\textwidth]{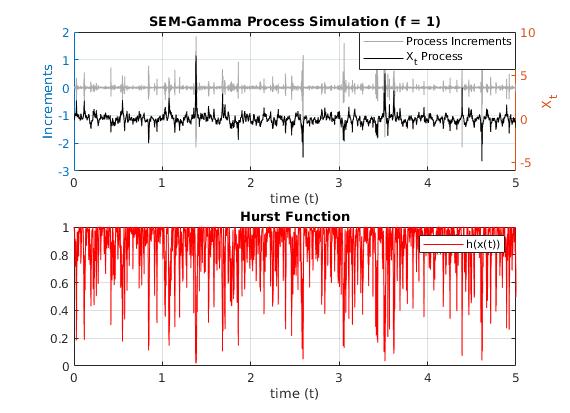}\includegraphics[width=0.49\textwidth]{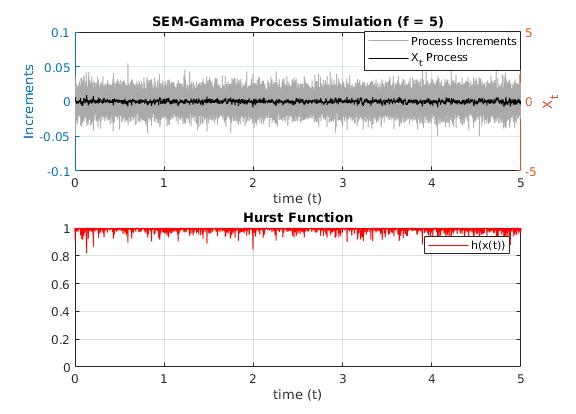}
\par\end{centering}
\caption[SEM-Gamma Process with different speeds of mean reversion]{Numerical simulations of trajectories of SEM-Gamma processes given
$h\left(x\right)=\frac{1}{1+x^{2}}$ and $f\in\left\{ 0,0.5,1,10\right\} $.}
\label{plot: SEM-Gamma_Comparative}
\end{figure}
\end{example}

\begin{rem}
Notice one can control the clustering effect of the increments and
the varying regularity of the process by controlling the parameter
$f$, regardless of the Hurst function chosen as $h$. This is desirable
in numerous fields, for example in financial markets modeling, when
trying to capture shocks in asset prices. It is also important to
remark the fact that using $f\left(x\right)=5,$ we reduced the amount
of spikes to none, shifting the process nature from very rough and
big drift, to a very smooth and driftless process. The following Figure
$\left(\ref{plot: scale comparison}\right)$ shows how by zooming
in the case $f\left(x\right)=5$ close enough we observe the rough
nature hidden at a lower scale.

\begin{figure}
\begin{centering}
\includegraphics[width=0.49\textwidth]{EX4_SEMGammaProcess_a=5_smallview}\includegraphics[width=0.49\textwidth]{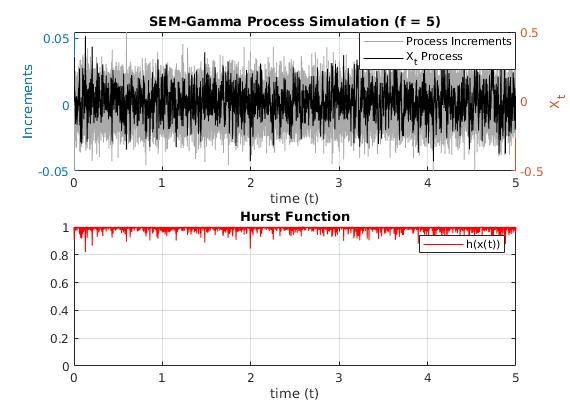}
\par\end{centering}
\caption[SEM-Gamma Process hidden roughness]{Scale comparative of the SEM-Gamma process with $f\left(x\right)=5$
and $h\left(x\right)=\frac{1}{1+x^{2}}$.}
\label{plot: scale comparison}
\end{figure}
It also makes sense to let $f\left(x\right)$ be a function of $x$,
rather than a constant and in particular, if we take $f\left(x\right)=h\left(x\right)=\frac{1}{1+x^{2}}$,
we can see in the following Figure $\left(\ref{plot: SEM-Gamma h1equalh2}\right)$
how the regime switch in the Hurst exponent is less abrupt favoring
sustained difference of roughness in time.

\begin{figure}
\begin{centering}
\includegraphics[width=0.6\textwidth]{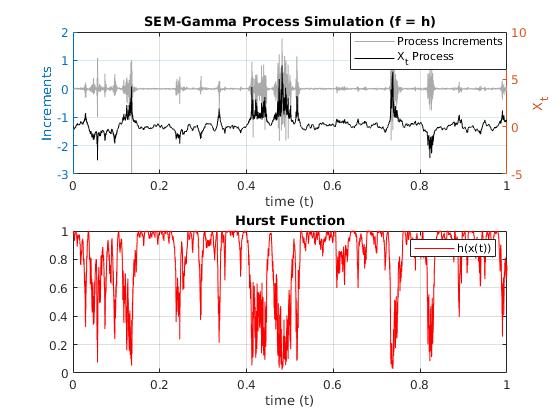}
\par\end{centering}
\caption[SEM-Gamma Process Simulation]{Numerical simulation of a trajectory of a SEM-Gamma Process given
the Hurst function is $f\left(x\right)=h(x)=\frac{1}{1+x^{2}}$}
\label{plot: SEM-Gamma h1equalh2}
\end{figure}
\end{rem}

\newpage{}
\begin{rem}
The plots in Figure $\left(\ref{plot: ACF}\right)$ show the autocorrelation
function of the absolute value in the time series of the increments
in the SEM process (left graph) from example (\ref{exa:3}) and in
the SEM-Gamma process (right graph) with $f\left(x\right)=0.1$. Notice
that autocorrelation in the second case is clearly much higher. 

\begin{figure}
\centering{}\includegraphics[width=0.49\textwidth]{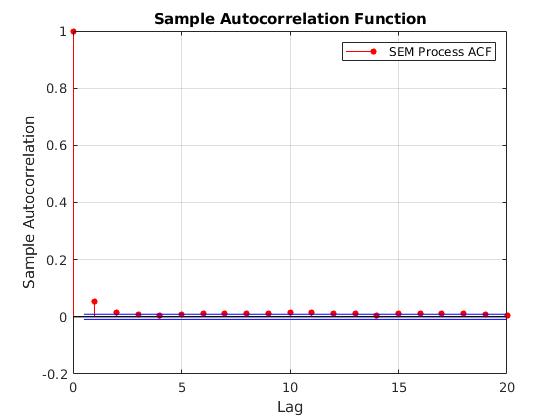}\includegraphics[width=0.49\textwidth]{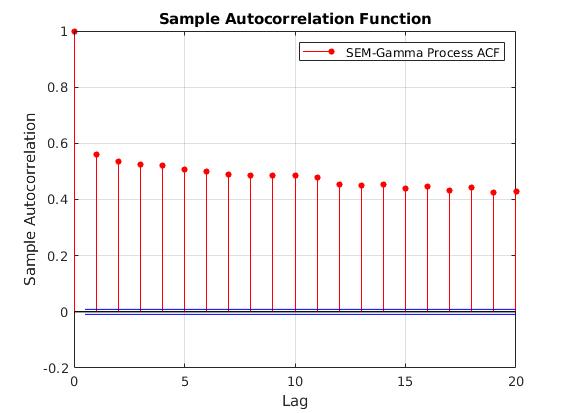}\caption[ACF Functions]{SEM and SEM-Gamma Processes Autocorrelation Function.}
\label{plot: ACF}
\end{figure}
\end{rem}

\newpage{}

\section{Appendix}

In this appendix we have placed the proofs for the results related
with SEM-Gamma process since they are analogous to the proofs in previous
sections.

\subsection{Proof of Lemma \ref{lem:SEM-OU_Well_defined}}
\begin{proof}
We will again proof the three results stated in the lemma by reducing
them to the case proved in Lemma \ref{lem:Well defined sigma}. To
do so we start by proving equation $\left(\ref{eq:sigma linear growth-gamma}\right)$.
By definition, we have that
\begin{align*}
\sigma\left(t,s,x\right) & =\exp\left(-f\left(t,x\right)\left(t-s\right)\right)\left(t-s\right)^{h\left(t,x\right)-\frac{1}{2}}.
\end{align*}
Note that 
\[
\exp\left(-f\left(t,x\right)\left(t-s\right)\right)\leq1,
\]
 since $f\geq0$ and $s<t$, for all $\left(t,s\right)\in\triangle^{\left(2\right)}\left(\left[0,T\right]\right)$.
We also have that $h\left(t,x\right)\in\left[h_{*},h^{*}\right]\subset\left(0,1\right)$
for all $\left(t,x\right)\in\left[0,T\right]\times\mathbb{R}$. Therefore
the result trivially follows from Lemma \ref{lem:Well defined sigma}.

Next we consider equation $\left(\ref{eq:Lipschitz sigma-gamma}\right)$,
using the fact that we can rewrite $\sigma\left(t,s,x\right)$ as
\[
\sigma\left(t,s,x\right)=\exp\left(-f\left(t,x\right)\left(t-s\right)+\log\left(t-s\right)\left(h\left(t,x\right)-\frac{1}{2}\right)\right),
\]
and make use again of inequality $\left(\ref{eq: expinequality}\right)$,
for all $x,y\in\mathbb{R}.$ Now we can write the following upper
bound
\begin{align*}
 & \left|\sigma\left(t,s,x\right)-\sigma\left(t,s,y\right)\right|\\
 & \leq\exp\left(\max\left(-f\left(t,x\right),-f\left(t,y\right)\right)\left(t-s\right)\right)\exp\left(\left(\max\left(h\left(t,x\right),h\left(t,y\right)\right)-\frac{1}{2}\right)\cdot\log\left(t-s\right)\right)\\
 & \quad\times\left(\left|f\left(t,x\right)-f\left(t,y\right)\right|\left|t-s\right|+\left|\log\left(t-s\right)\right|\left|h\left(t,x\right)-h\left(t,y\right)\right|\right).
\end{align*}
Recalling that $\left|e^{-f\left(t,x\right)\left(t-s\right)}\right|\leq1$
and that $f$ and $h$ are uniformly Lipschitz, we have that
\begin{align*}
 & \left|\sigma\left(t,s,x\right)-\sigma\left(t,s,y\right)\right|^{2}\\
 & \leq C^{2}\exp\left(\max\left(\log\left(t-s\right)\left(2h\left(t,x\right)-1\right),\log\left(t-s\right)\left(2h\left(t,y\right)-1\right)\right)\right)\\
 & \quad\times\left|\log\left(t-s\right)\right|^{2}\left|x-y\right|^{2}.
\end{align*}
This reduces the proof to the previous case of a SEM process, see
Lemma \ref{lem:Well defined sigma}. 
\end{proof}

\subsection{Proof of Lemma \ref{lem:LambdaFunctionGamma}}
\begin{proof}
In order to prove equation $\left(\ref{eq:time reg sigma-gamma}\right)$,
it is clear that
\begin{align*}
\sigma\left(t,s,x\right)-\sigma\left(t^{\prime},s,x\right) & =e^{-f\left(t,x\right)\left(t-s\right)}\left(t-s\right)^{h\left(t,x\right)-\frac{1}{2}}-e^{-f\left(t^{\prime},x\right)\left(t^{\prime}-s\right)}\left(t^{\prime}-s\right)^{h\left(t^{\prime},x\right)-\frac{1}{2}}.
\end{align*}
Furthermore, notice that for all $t>t^{\prime}>s>0,$ we can add and
subtract the term 
\[
e^{-f\left(t,x\right)\left(t-s\right)}\left(t-s\right)^{h(t^{\prime},x)-\frac{1}{2}},
\]
 to get
\[
\sigma\left(t,s,x\right)-\sigma\left(t^{\prime},s,x\right)=\tilde{J}^{1}\left(t,t^{\prime},s,x\right)+\tilde{J}^{2}\left(t,t^{\prime},s,x\right),
\]
where
\begin{align*}
\tilde{J}^{1}\left(t,t^{\prime},s,x\right) & :=e^{-f\left(t,x\right)\left(t-s\right)}\left(\left(t-s\right)^{h\left(t,x\right)-\frac{1}{2}}-\left(t-s\right)^{h(t^{\prime},x)-\frac{1}{2}}\right),\\
\tilde{J}^{2}\left(t,t^{\prime},s,x\right) & :=e^{-f\left(t,x\right)\left(t-s\right)}\left(t-s\right)^{h(t^{\prime},x)-\frac{1}{2}}-e^{-f\left(t^{\prime},x\right)\left(t^{\prime}-s\right)}\left(t^{\prime}-s\right)^{h\left(t^{\prime},x\right)-\frac{1}{2}}.
\end{align*}
First we bound $\tilde{J}^{1}$ by using that $e^{-f\left(t,x\right)\left(t-s\right)}\leq1,$
\begin{align*}
\left|\tilde{J}^{1}\left(t,t^{\prime},s,x\right)\right| & \leq\left|J^{1}\left(t,t^{\prime},s,x\right)\right|\leq C_{T,\delta}\left|t-t^{\prime}\right|\left|t'-s\right|^{h_{*}-\frac{1}{2}-\delta},
\end{align*}
since $s<t^{\prime}<t,$ and where $J^{1}$ is the terms appearing
in the proof of Lemma \ref{lem:LambdaFunction}. Next, in order to
bound the term $\tilde{J}^{2}$, we add and subtract the quantity
\[
e^{-f\left(t,x\right)\left(t-s\right)}\left(t^{\prime}-s\right)^{h\left(t^{\prime},x\right)-\frac{1}{2}},
\]
 to obtain
\begin{align*}
\left|\tilde{J}^{2}\left(t,t^{\prime},s,x\right)\right| & \leq\left|e^{-f\left(t,x\right)\left(t-s\right)}\left(\left(t-s\right)^{h(t^{\prime},x)-\frac{1}{2}}-\left(t^{\prime}-s\right)^{h\left(t^{\prime},x\right)-\frac{1}{2}}\right)\right.\\
 & \qquad\left.-\left(t^{\prime}-s\right)^{h\left(t^{\prime},x\right)-\frac{1}{2}}\left(e^{-f\left(t,x\right)\left(t-s\right)}-e^{-f\left(t^{\prime},x\right)\left(t^{\prime}-s\right)}\right)\right|\\
 & \leq\left|e^{-f\left(t,x\right)\left(t-s\right)}\right|\left|\left(t-s\right)^{h(t^{\prime},x)-\frac{1}{2}}-\left(t^{\prime}-s\right)^{h\left(t^{\prime},x\right)-\frac{1}{2}}\right|\\
 & \qquad+\left|\left(t^{\prime}-s\right)^{h\left(t^{\prime},x\right)-\frac{1}{2}}\right|\left|e^{-f\left(t,x\right)\left(t-s\right)}-e^{-f\left(t^{\prime},x\right)\left(t^{\prime}-s\right)}\right|\\
 & \leq\left|J^{2}\left(t,t^{\prime},s,x\right)\right|+\left|\left(t^{\prime}-s\right)^{h\left(t^{\prime},x\right)-\frac{1}{2}}\right|\left|e^{-f\left(t,x\right)\left(t-s\right)}-e^{-f\left(t^{\prime},x\right)\left(t^{\prime}-s\right)}\right|,
\end{align*}
where $J^{2}$ is the term appearing in the proof of Lemma \ref{lem:LambdaFunction}.
Using inequality $\left(\ref{eq: expinequality}\right)$ we can rewrite
the previous expression as
\begin{align*}
\left|\tilde{J}^{2}\left(t,t^{\prime},s,x\right)\right| & \leq\left|J^{2}\left(t,t^{\prime},s,x\right)\right|+\left|\left(t^{\prime}-s\right)^{h\left(t^{\prime},x\right)-\frac{1}{2}}\right|\\
 & \qquad\times\left|e^{\max\left(-f\left(t,x\right)\left(t-s\right),-f\left(t^{\prime},x\right)\left(t^{\prime}-s\right)\right)}\right|\left|f\left(t^{\prime},x\right)\left(t^{\prime}-s\right)-f\left(t,x\right)\left(t-s\right)\right|\\
 & \leq\left|J^{2}\left(t,t^{\prime},s,x\right)\right|+C_{T}\left|t^{\prime}-s\right|^{h\left(t^{\prime},x\right)-\frac{1}{2}}\left|t-t^{\prime}\right|\left|f\left(t^{\prime},x\right)-f\left(t,x\right)\right|.
\end{align*}
Then, adding and subtracting $f\left(t,x\right)\left(t'-s\right)$,
and using the linear growth and Lipschitz conditions on $f$, we obtain
\begin{align*}
\left|f\left(t^{\prime},x\right)\left(t^{\prime}-s\right)-f\left(t,x\right)\left(t-s\right)\right| & \leq\left|f\left(t,x\right)\right|\left|t'-t\right|+\left|t'-s\right|\left|f\left(t',x\right)-f\left(t,x\right)\right|\\
 & \leq C\left|t'-t\right|\left(1+\left|x\right|\right)+C\left|t'-s\right|\left|t'-t\right|,
\end{align*}
and we can conclude that
\begin{align*}
\left|\tilde{J}^{2}\left(t,t^{\prime},s,x\right)\right| & \leq\left|J^{2}\left(t,t^{\prime},s,x\right)\right|+C\left|t^{\prime}-s\right|^{h\left(t^{\prime},x\right)-\frac{1}{2}}\left|t-t^{\prime}\right|\left(1+\left|x\right|\right)\\
 & \leq\left|J^{2}\left(t,t^{\prime},s,x\right)\right|+C_{T}\left|t^{\prime}-s\right|^{h_{*}-\frac{1}{2}}\left|t-t^{\prime}\right|\left(1+\left|x\right|\right).
\end{align*}
Therefore, if we define 
\[
\lambda_{\gamma}\left(t,t^{\prime},s\right):=C_{T,\gamma}\left(t-t^{\prime}\right)^{\gamma}\left(t^{\prime}-s\right)^{-1+h_{*}-\frac{\gamma}{2}},
\]
for $0<\gamma<2h_{*},$ and use the final bounds for $J^{1}$ and
$J^{2}$ in Lemma \ref{lem:LambdaFunction}, we get that
\begin{align*}
 & \left|\sigma\left(t,s,x\right)-\sigma\left(t^{\prime},s,x\right)\right|^{2}\\
 & \leq4\left(\left|J^{1}\left(t,t^{\prime},s,x\right)\right|^{2}+\left|J^{2}\left(t,t^{\prime},s,x\right)\right|^{2}+\left|C_{T}\left|t^{\prime}-s\right|^{h_{*}-\frac{1}{2}}\left|t-t^{\prime}\right|\left(1+\left|x\right|\right)\right|^{2}\right)\\
 & \leq\lambda_{\gamma}\left(t,t^{\prime},s\right)\left(1+\left|x\right|^{2}\right),
\end{align*}
and 
\[
\int_{0}^{t^{\prime}}\lambda_{\gamma}\left(t,t^{\prime},s\right)ds\leq C_{T,\gamma}\left(t^{\prime}\right)^{h_{*}-\frac{\gamma}{2}}\left(t-t^{\prime}\right)^{\gamma},
\]
 which concludes the proof.
\end{proof}

\subsection{Proof of Lemma \ref{lem:Bound=000026TimeLipschContEulerGamma}}
\begin{proof}
Recall that $k\left(t,s\right)=C_{T}\left(t-s\right)^{2h_{*}-1}$
and, since $\eta\left(s\right)\leq s$, we have the following inequality

\begin{equation}
k\left(t,\eta\left(s\right)\right)\leq k\left(t,s\right).\label{eq: Volterrabound-1}
\end{equation}
Using the Itô isometry, that $e^{-2f\left(t,x\right)}\leq1$, equation
$\left(\ref{eq:BoundSigma2_No_x}\right)$ and equation (\ref{eq: Volterrabound-1}),
we obtain
\begin{align*}
\mathbb{E}\left[\left|\bar{X}_{t}^{h,f}\right|^{2}\right] & =\mathbb{E}\left[\int_{0}^{t}e^{-2f\left(t,\bar{X}_{\eta\left(s\right)}^{h,f}\right)\left(t-\eta\left(s\right)\right)}\left(t-\eta\left(s\right)\right)^{2h\left(t,\bar{X}_{\eta\left(s\right)}^{h,f}\right)-1}ds\right]\\
 & \leq\mathbb{E}\left[\int_{0}^{t}\left(t-\eta\left(s\right)\right)^{2h\left(t,\bar{X}_{\eta\left(s\right)}^{h,f}\right)-1}ds\right]\\
 & \leq\int_{0}^{t}k\left(t,\eta\left(s\right)\right)ds\leq\int_{0}^{t}k\left(t,s\right)ds\leq C_{T}.
\end{align*}
To prove the bound $\left(\ref{eq:SecondMomentTLCE}\right),$ note
that
\begin{align*}
\bar{X}_{t}^{h,f}-\bar{X}_{t^{\prime}}^{h,f} & =\int_{t^{\prime}}^{t}e^{-f\left(t,\bar{X}_{\eta\left(s\right)}^{h,f}\right)\left(t-\eta\left(s\right)\right)}\left(t-\eta\left(s\right)\right)^{h\left(t,\bar{X}_{\eta\left(s\right)}^{h,f}\right)-\frac{1}{2}}dB_{s},\\
 & \quad+\int_{0}^{t^{\prime}}\left\{ e^{-f\left(t,\bar{X}_{\eta\left(s\right)}^{h,f}\right)\left(t-\eta\left(s\right)\right)}\left(t-\eta\left(s\right)\right)^{h\left(t,\bar{X}_{\eta\left(s\right)}^{h,f}\right)-\frac{1}{2}}\right.\\
 & \qquad\qquad\left.-e^{-f\left(t',\bar{X}_{\eta\left(s\right)}^{h,f}\right)\left(t-\eta\left(s\right)\right)}\left(t^{\prime}-\eta\left(s\right)\right)^{h\left(t^{\prime},\bar{X}_{\eta\left(s\right)}^{h,f}\right)-\frac{1}{2}}\right\} dB_{s}\\
 & =:J_{1}+J_{2}.
\end{align*}
 Due to the Itô isometry, that $e^{-2f\left(t,x\right)}\leq1$, equation
$\left(\ref{eq:BoundSigma2_No_x}\right)$ and $\left(\ref{eq: Volterrabound-1}\right)$,
we obtain the bounds

\begin{align*}
\mathbb{E}\left[\left|J_{1}\right|^{2}\right] & =\mathbb{E}\left[\int_{t^{\prime}}^{t}e^{-2f\left(t,\bar{X}_{\eta\left(s\right)}^{h,f}\right)\left(t-\eta\left(s\right)\right)}\left(t-\eta\left(s\right)\right)^{2h\left(t,\bar{X}_{\eta\left(s\right)}^{h,f}\right)-1}ds\right]\\
 & \leq\int_{t'}^{t}k\left(t,\eta\left(s\right)\right)ds\leq\int_{t'}^{t}k\left(t,s\right)ds=C_{T}\left|t-t^{\prime}\right|^{2h_{*}}.
\end{align*}
Using again the Itô isometry, equation $\left(\ref{eq:time reg sigma-gamma}\right)$
and equation $\left(\ref{eq:TimeLispchitzIntegralLambda-gamma}\right)$
we can write, for any $\gamma<2h_{*},$ that
\begin{align*}
\mathbb{E}\left[\left|J_{2}\right|^{2}\right] & \leq\int_{0}^{t^{\prime}}\lambda_{\gamma}\left(t,t^{\prime},\eta\left(s\right)\right)\left(1+\mathbb{E}\left[\left|\bar{X}_{\eta\left(s\right)}^{h,f}\right|^{2}\right]\right)ds\leq C_{T}\int_{0}^{t^{\prime}}\lambda_{\gamma}\left(t,t^{\prime},s\right)ds\leq C_{T,\gamma}\left|t-t^{\prime}\right|^{\gamma},
\end{align*}
where in the second inequality we have used that $\lambda_{\gamma}\left(t,t^{\prime},\eta\left(s\right)\right)\leq\lambda_{\gamma}\left(t,t^{\prime},s\right)$,
because $\lambda_{\gamma}$ is essentially a negative fractional power
of $(t-s)$ and $\eta\left(s\right)\leq s$ and also that $\mathbb{E}\left[\left|\bar{X}_{t}^{h,f}\right|^{2}\right]\leq C_{T}$,
$0\leq t\leq T$, which we just have proved above. Combining the bounds
for $\mathbb{E}\left[\left|J_{1}\right|^{2}\right]$ and $\mathbb{E}\left[\left|J_{2}\right|^{2}\right]$
the result follows.
\end{proof}

\subsection{Proof of Theorem \ref{thm:EM_StrongConvergenceGamma}}
\begin{proof}
We will reduce the proof to the case in Theorem \ref{thm:EM_StrongConvergence}.
To do so, in the same way we did, we define
\[
\delta_{t}:=X_{t}^{h,f}-\bar{X}_{t}^{h,f},\qquad\varphi\left(t\right):=\sup_{0\leq s\leq t}\mathbb{E}\left[\left|\delta_{s}\right|^{2}\right],\quad t\in\left[0,T\right].
\]
For any $t\in\left[0,T\right]$, we can write
\begin{align*}
\delta_{t} & =\int_{0}^{t}\left(e^{-f\left(t,X_{s}^{h,f}\right)\left(t-s\right)}\left(t-s\right)^{h(t,X_{s}^{h,f})-\frac{1}{2}}\right.\\
 & \qquad-\left.e^{-f\left(t,\bar{X}_{\eta\left(s\right)}^{h,f}\right)\left(t-\eta\left(s\right)\right)}\left(t-\eta\left(s\right)\right)^{h\left(t,\bar{X}_{\eta\left(s\right)}^{h,f}\right)-\frac{1}{2}}\right)dB_{s}\\
 & =\int_{0}^{t}\left(e^{-f\left(t,X_{s}^{h,f}\right)\left(t-s\right)}\left(t-s\right)^{h(t,X_{s}^{h,f})-\frac{1}{2}}\right.\\
 & \qquad-\left.e^{-f\left(t,X_{s}^{h,f}\right)\left(t-s\right)}\left(t-s\right)^{h\left(t,\bar{X}_{\eta\left(s\right)}^{h,f}\right)-\frac{1}{2}}\right)dB_{s}\\
 & +\int_{0}^{t}\left(e^{-f\left(t,X_{s}^{h,f}\right)\left(t-s\right)}\left(t-s\right)^{h\left(t,\bar{X}_{\eta\left(s\right)}^{h,f}\right)-\frac{1}{2}}\right.\\
 & \qquad-\left.e^{-f\left(t,\bar{X}_{\eta\left(s\right)}^{h,f}\right)\left(t-\eta\left(s\right)\right)}\left(t-\eta\left(s\right)\right)^{h\left(t,\bar{X}_{\eta\left(s\right)}^{h,f}\right)-\frac{1}{2}}\right)dB_{s}\\
 & =:\tilde{I}_{1}\left(t\right)+\tilde{I}_{2}\left(t\right).
\end{align*}
First we bound the second moment of $\tilde{I}_{1}\left(t\right)$
in terms of a certain integral of $\varphi$. Using the Itô isometry,
equation $\left(\ref{eq:Lipschitz sigma-gamma}\right)$ and the Lipschitz
property of $h$ we get
\begin{align*}
\mathbb{E}\left[\left|\tilde{I}_{1}\left(t\right)\right|^{2}\right] & \leq\int_{0}^{t}k\left(t,s\right)\left(\log\left(t-s\right)\right)^{2}\mathbb{E}\left[\left(h\left(t,X_{s}^{h,f}\right)-h\left(t,\bar{X}_{\eta\left(s\right)}^{h,f}\right)\right)^{2}\right]ds\\
 & \leq C_{T,\delta}\int_{0}^{t}\left(t-s\right)^{2\left(h_{*}-\delta\right)-1}\mathbb{E}\left[\left|X_{s}^{h,f}-\bar{X}_{\eta\left(s\right)}^{h,f}\right|^{2}\right]ds,
\end{align*}
for $\delta>0,$ arbitrarily small. By the same arguments as in the
proof of Theorem \ref{thm:EM_StrongConvergence} we obtain the following
bound
\begin{equation}
\mathbb{E}\left[\left|\tilde{I}_{1}\right|^{2}\right]\leq C_{T,h_{*}}\left\{ \int_{0}^{t}\left(t-s\right)^{h_{*}-1}\varphi\left(s\right)ds+\left|\Delta t\right|^{\gamma}\right\} .\label{eq: barI1Bound}
\end{equation}
Next, we find a bound for the second moment of $\tilde{I}_{2}\left(t\right)$.
Using again the Itô isometry, equations $\left(\ref{eq:time reg sigma-gamma}\right)$
and $\left(\ref{eq:TimeLispchitzIntegralLambda-gamma}\right)$, and
Lemma \ref{lem:Bound=000026TimeLipschContEuler} we can write
\begin{equation}
\mathbb{E}\left[\left|\tilde{I}_{2}\right|^{2}\right]\leq\int_{0}^{t}\lambda_{\gamma}\left(t+\left(s-\eta\left(s\right)\right),t,s\right)\left(1+\mathbb{E}\left[\left|\bar{X}_{\eta\left(s\right)}^{h,f}\right|^{2}\right]\right)ds\leq C_{T,\gamma}\left|\Delta t\right|^{\gamma},\label{eq: barI2Bound}
\end{equation}
for any $\gamma<2h_{*}$, and where we have used that 
\[
\mathbb{E}\left[\left|\bar{X}_{s}^{h,f}\right|^{2}\right]\leq C_{T},\qquad0\leq s\leq T.
\]
Combining the inequalities $\left(\ref{eq: barI1Bound}\right)$ and
$\left(\ref{eq: barI2Bound}\right)$ we obtain
\[
\tilde{\varphi}\left(t\right)\leq C_{T,\gamma,h_{*}}\left\{ \int_{0}^{t}\left(t-s\right)^{h_{*}-1}\varphi\left(s\right)ds+\left|\Delta t\right|^{\gamma}\right\} .
\]
Using again Lemma \ref{theo:VolterraGronwall} we can conclude.
\end{proof}
\bibliographystyle{APT}
\bibliography{SEMP}

\begin{thebibliography}{10}

\bibitem{B-N16}
{\sc Barndorff-Nielsen, O.~E.} (2016).
\newblock Gamma kernels and {\it {bss}/{lss}} processes.
\newblock In {\em Advanced modelling in mathematical finance}.
\newblock vol.~189 of {\em Springer Proc. Math. Stat.} Springer, Cham
  pp.~41--61.

\bibitem{BeJaRo97}
{\sc Benassi, A., Jaffard, S. and Roux, D.} (1997).
\newblock Elliptic {G}aussian random processes.
\newblock {\em Rev. Mat. Iberoamericana\/} {\bf 13,} 19--90.

\bibitem{BiPaPi13}
{\sc Bianchi, S., Pantanella, A. and Pianese, A.} (2013).
\newblock Modeling stock prices by multifractional {B}rownian motion: an
  improved estimation of the pointwise regularity.
\newblock {\em Quant. Finance\/} {\bf 13,} 1317--1330.

\bibitem{BiPaPi15}
{\sc Bianchi, S., Pantanella, A. and Pianese, A.} (2015).
\newblock Efficient markets and behavioral finance: a comprehensive
  multifractional model.
\newblock {\em Adv. Complex Syst.\/} {\bf 18,} 1550001, 29.

\bibitem{BiPi07}
{\sc Bianchi, S. and Pianese, A.} (2007).
\newblock Modelling stock price movements: multifractality or
  multifractionality?
\newblock {\em Quant. Finance\/} {\bf 7,} 301--319.

\bibitem{BiPi15}
{\sc Bianchi, S. and Pianese, A.} (2015).
\newblock Asset price modeling: from fractional to multifractional processes.
\newblock In {\em Future perspectives in risk models and finance}.
\newblock vol.~211 of {\em Internat. Ser. Oper. Res. Management Sci.} Springer,
  Cham pp.~247--285.

\bibitem{CoTa04}
{\sc Cont, R. and Tankov, P.} (2004).
\newblock {\em Financial modelling with jump processes}.
\newblock Chapman \& Hall/CRC Financial Mathematics Series. Chapman \&
  Hall/CRC, Boca Raton, FL.

\bibitem{CoLeVe14}
{\sc Corlay, S., Lebovits, J. and V\'{e}hel, J.~L.} (2014).
\newblock Multifractional stochastic volatility models.
\newblock {\em Math. Finance\/} {\bf 24,} 364--402.

\bibitem{Ha18}
{\sc Hawkes, A.~G.} (2018).
\newblock Hawkes processes and their applications to finance: a review.
\newblock {\em Quant. Finance\/} {\bf 18,} 193--198.

\bibitem{KarShre}
{\sc Ioannis~Karatzas, S.~S.} (1998).
\newblock {\em Brownian motion and stochastic calculus, second edition}.
\newblock Springer.

\bibitem{LeVe14}
{\sc Lebovits, J. and V\'{e}hel, J.~L.} (2014).
\newblock White noise-based stochastic calculus with respect to multifractional
  {B}rownian motion.
\newblock {\em Stochastics\/} {\bf 86,} 87--124.

\bibitem{PiBiPa18}
{\sc Pianese, A., Bianchi, S. and Palazzo, A.~M.} (2018).
\newblock Fast and unbiased estimator of the time-dependent {H}urst exponent.
\newblock {\em Chaos\/} {\bf 28,} 031102, 6.

\bibitem{FiSo11}
{\sc Sornette, D. and Filimonov, V.} (2011).
\newblock Self-excited multifractal dynamics.
\newblock {\em Europhysics Letters Association\/}.

\bibitem{YeGaoDing07}
{\sc Ye, H., Gao, J. and Ding, Y.} (2007).
\newblock A generalized {G}ronwall inequality and its application to a
  fractional differential equation.
\newblock {\em J. Math. Anal. Appl.\/} {\bf 328,} 1075--1081.

\bibitem{Zha08}
{\sc Zhang, X.} (2008).
\newblock Euler schemes and large deviations for stochastic {V}olterra
  equations with singular kernels.
\newblock {\em J. Differential Equations.\/} {\bf 244,} 2226--2250.

\bibitem{Zha10}
{\sc Zhang, X.} (2010).
\newblock Stochastic {V}olterra equations in {B}anach spaces and stochastic
  partial differential equation.
\newblock {\em J. Funct. Anal.\/} {\bf 258,} 1361--1425.

\end{thebibliography}

\end{document}